\title{The Limit Shape of a Stochastic Bulgarian Solitaire}
\author[1]{Kimmo Eriksson}
\author[1]{Markus Jonsson}
\author[2]{Jonas Sjöstrand}
\affil[1]{Mälardalen University, School of Education, Culture and Communication, Box 883, SE-72123 Västerås, Sweden}
\affil[2]{Royal Institute of Technology, Department of Mathematics, SE-10044 Stockholm, Sweden}
\newtheorem{theorem}{Theorem}
\newtheorem{proposition}{Proposition}
\newtheorem{lemma}{Lemma}
\newtheorem{corollary}{Corollary}
\newtheorem{observation}{Observation}
\theoremstyle{definition}
\newtheorem{definition}{Definition}
\newtheorem{example}{Example}
\newcommand{\NN}{\mathbb{N}}
\newcommand{\ZZ}{\mathbb{Z}}
\newcommand{\RR}{\mathbb{R}}
\newcommand{\UD}{\textit{UD}}
\newcommand{\eps}{\varepsilon}
\newcommand{\Prob}{{\rm Prob}}
\newcommand{\itemb}{\item[$\bullet$]}
\begin{document}

\title{Markov chains on graded posets: Compatibility of up-directed and down-directed transition probabilities}
\maketitle

\begin{abstract}
We consider two types of discrete-time Markov chains where the
state space is a graded poset and the transitions
are taken along the covering relations in the poset. The first type
of Markov chain goes only in one direction, either up or down in the poset
(an \emph{up chain} or \emph{down chain}). The second type
toggles between two adjacent rank levels (an \emph{up-and-down chain}).

We introduce two compatibility concepts between the up-directed
transition probabilities (an \emph{up rule}) and the down-directed
(a \emph{down rule}), and we relate these to compatibility between
up-and-down chains. This framework is used to prove a conjecture
about a limit shape for a process on Young's lattice.

Finally, we settle the questions whether the reverse of an
up chain is a down chain for some down rule and whether there
exists an up or down chain at all if the rank function is not
bounded.
\end{abstract}

\section{Introduction}
\label{sec:intro}
A Hasse walk is a walk along the covering relations in a poset
\cite{stanley1996enumerative1,stanley1988differential}. 
In \cite{Eriksson2012575}, Eriksson and Sjöstrand discussed how several
famous models of stochastic processes can be regarded as random Hasse walks
on Young's lattice, either walks that go steadily upwards (e.g.\ Simon's
model of urban growth) or that alternately go up and down (e.g.\ the 
Moran model in population genetics). The aim of the present paper is
to develop a general framework of such unidirected and alternatingly
directed random Hasse walks on graded posets.

Let $I\subseteq\ZZ$ be a (possibly infinite) interval of the integers.
An \emph{$I$-graded poset} $\Omega$ is a countable (or finite)
poset together with a surjective
map $\rho\colon\Omega\to I$, called
the \emph{rank function}, such that
\begin{itemize}
\itemb
$u<v$ implies $\rho(u)<\rho(v)$, and
\itemb
$u\lessdot v$ implies $\rho(v)=\rho(u)+1$,
\end{itemize}
where $u\lessdot v$ means that $v$ \emph{covers} $u$, that is $u<v$ but there is no $w$ with $u<w<v$.
We can partition $\Omega=\bigcup_{i\in I}\Omega_i$ into
its \emph{level sets} $\Omega_i=\rho^{-1}(i)$.

We will now describe two types of stochastic processes on $\Omega$.

\subsection{Up chains and down chains}
\label{sec:up-processes}
An assignment of a probability $T(u\rightarrow v)$ to any pair
$(u,v)\in\Omega\times\Omega$ is an
\emph{up rule} if
\begin{itemize}
\itemb
$T(u\rightarrow v)>0\ \Leftrightarrow\ u\lessdot v$, and
\itemb
$\sum_{v\in\Omega}T(u\rightarrow v)=1$ for any $u$ with
non-maximal rank.
\end{itemize}
Analogously, it is a \emph{down rule} if
\begin{itemize}
\itemb
$T(u\rightarrow v)>0\ \Leftrightarrow\ u\gtrdot v$, and
\itemb
$\sum_{v\in\Omega}T(u\rightarrow v)=1$ for any $u$ with
non-minimal rank.
\end{itemize}

\begin{example}
Figure \ref{fig:exup} shows an example of a $[0,2]$-graded poset
with two sets of probabilities forming an up rule (left) and a down rule (right).
\end{example}
\captionsetup{justification=raggedright,singlelinecheck=false}
\begin{figure}[h]
%\centering
\begin{tikzpicture}
\draw (0,0) -- (-2,2);
\draw (0,0) -- (2,2);
\draw (-1,1) -- (0,2);
\draw (1,1) -- (0,2);

\draw[->] (0,0) -- (-0.5,0.5);
\draw[->] (0,0) -- (0.5,0.5);
\draw[->] (-1,1) -- (-0.5,1.5);
\draw[->] (-1,1) -- (-1.5,1.5);
\draw[->] (1,1) -- (0.5,1.5);
\draw[->] (1,1) -- (1.5,1.5);

\draw[fill] (0,0) circle (.06);
\draw[fill] (-1,1) circle (.06);
\draw[fill] (-2,2) circle (.06);
\draw[fill] (0,2) circle (.06);
\draw[fill] (1,1) circle (.06);
\draw[fill] (2,2) circle (.06);

%\node at (0,-0.2) {$\hat{0}$};
\node[left] at (-0.5,0.5) {$\frac{7}{10}$};
\node[right] at (0.5,0.5) {$\frac{3}{10}$};
\node[left] at (-1.5,1.45) {$\frac{3}{4}$};
\node[left] at (-0.5,1.55) {$\frac{1}{4}$};
\node[right] at (0.5,1.55) {$\frac{2}{5}$};
\node[right] at (1.5,1.45) {$\frac{3}{5}$};

%\draw[dotted] (-2.1,2.1) -- (-2.4,2.4);
%\draw[dotted] (0,2.1) -- (0,2.5);
%\draw[dotted] (2.1,2.1) -- (2.4,2.4);

\draw (6,0) -- (4,2);
\draw (6,0) -- (8,2);
\draw (5,1) -- (6,2);
\draw (7,1) -- (6,2);

\draw[->] (5,1) -- (5.5,0.5);
\draw[->] (7,1) -- (6.5,0.5);
\draw[->] (6,2) -- (5.5,1.5);
\draw[->] (4,2) -- (4.5,1.5);
\draw[->] (6,2) -- (6.5,1.5);
\draw[->] (8,2) -- (7.5,1.5);

\draw[fill] (6,0) circle (.06);
\draw[fill] (5,1) circle (.06);
\draw[fill] (4,2) circle (.06);
\draw[fill] (6,2) circle (.06);
\draw[fill] (7,1) circle (.06);
\draw[fill] (8,2) circle (.06);

%\node at (0,-0.2) {$\hat{0}$};
\node[left] at (5.5,0.5) {$1$};
\node[right] at (6.5,0.5) {$1$};
\node[left] at (4.5,1.45) {$1$};
\node[left] at (5.5,1.55) {$\frac{3}{4}$};
\node[right] at (6.5,1.55) {$\frac{1}{4}$};
\node[right] at (7.5,1.45) {$1$};

%\draw[dotted] (-2.1,2.1) -- (-2.4,2.4);
%\draw[dotted] (0,2.1) -- (0,2.5);
%\draw[dotted] (2.1,2.1) -- (2.4,2.4);

\end{tikzpicture}
\caption{An up rule (left) and  a down rule (right) on some $\Omega$.}
\label{fig:exup}
\end{figure}
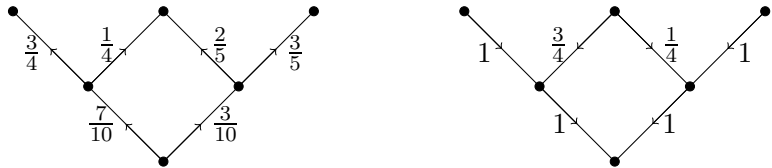

Up rules and down rules define classes of Markov chains on $\Omega$.
We will consider such Markov chains $(X_i)_{i\in J}$ on $\Omega$ for
any (possibly infinite) subinterval $J\subseteq I$.

\begin{definition}
Let $U$ be an up rule and let $D$ be a down rule on an $I$-graded
poset $\Omega$. A Markov chain $(X_i\in\Omega_i)_{i\in J}$ is a
\emph{$U$-chain (with time interval $J$)} if
\[
\Prob(X_i=u\ \text{and}\ X_{i+1}=v)
=\Prob(X_i=u)U(u\rightarrow v)
\]
for any non-maximal $i\in J$ and any $u,v\in\Omega$.
Analogously, it is a \emph{$D$-chain} if
\[
\Prob(X_i=u\ \text{and}\ X_{i-1}=v)
=\Prob(X_i=u)D(u\rightarrow v)
\]
for any non-minimal $i\in J$ and any $u,v\in\Omega$.

A $U$- or $D$-chain is \emph{maximal} if its time interval is
$I$, and it is \emph{positive} if $\Prob(X_i=u)>0$ for any $i\in J$
and any $u\in\Omega_i$.
\end{definition}

In this paper we shall examine when an up rule and a down rule are compatible
with each other. We shall distinguish between a weaker and a stronger notion
of compatibility.

\begin{definition}
An up rule $U$ and a down rule $D$ on $\Omega$ are \emph{compatible}
if there is a maximal $U$-chain $(X_i\in\Omega_i)_{i\in I}$
and a maximal $D$-chain $(Y_i\in\Omega_i)_{i\in I}$ such that,
for any $i\in I$, the random variables $X_i$ and $Y_i$ are
equally distributed.

$U$ and $D$ are \emph{strongly compatible} if
there is a Markov chain that
is both a maximal $U$-chain and a maximal $D$-chain.
\end{definition}

\begin{example}
Let $U$ and $D$ be the up and down rules depicted in Figure~\ref{fig:excompat}.
Clearly, $U$ and $D$ are compatible --- just assign the probability
$1/2$ to each element in the poset. But they are not strongly
compatible since the probability of going diagonally is $3/4$ for
the up rule and $1/4$ for the down rule.
\end{example}

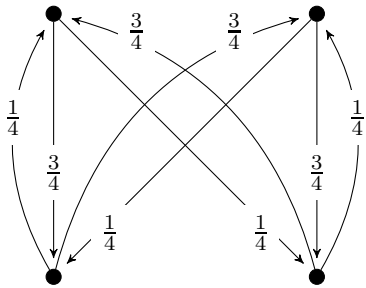
\begin{figure}[h]
\begin{tikzpicture}[->,>=stealth',shorten >=4pt,auto,node distance=3.5cm,thin]
%\begin{tikzpicture}[->,shorten >=4pt,auto,node distance=3cm,semithick]
%\tikzstyle{every state}=[fill=red,draw=none,text=white]
\tikzstyle{every state}=[fill=none,draw=none,text=black,radius=1cm]
%\begin{scope}[node distance=10cm and 2.5cm]

\node[state,inner sep=2pt,minimum size=2pt] (u1) {};
\node[state,inner sep=2pt,minimum size=2pt] (v1) [above of=u1] {};
\node[state,inner sep=2pt,minimum size=2pt] (u2) [right of=u1] {};
\node[state,inner sep=2pt,minimum size=2pt] (v2) [right of=v1] {};

\draw[fill] (v1) circle (0.1);
\draw[fill] (v2) circle (0.1);
\draw[fill] (u1) circle (0.1);
\draw[fill] (u2) circle (0.1);

\path (u1) edge [bend left] node [fill=white,pos=0.5,above] {$\frac{1}{4}$} (v1)
           edge [bend left] node [fill=white,pos=0.8,above=-6pt] {$\frac{3}{4}$} (v2)
      (u2) edge [bend right] node [fill=white,pos=0.8,above=-6pt] {$\frac{3}{4}$} (v1)
           edge [bend right] node [fill=white,pos=0.5,above] {$\frac{1}{4}$} (v2);

\path (v1) edge [left] node [fill=white,pos=0.5,below] {$\frac{3}{4}$} (u1)
      (v1) edge [left] node [fill=white,pos=0.8,below=-6pt] {$\frac{1}{4}$} (u2)
      (v2) edge [right] node [fill=white,pos=0.8,below=-6pt] {$\frac{1}{4}$} (u1)
      (v2) edge [left] node [fill=white,pos=0.5,below] {$\frac{3}{4}$} (u2);

%\end{scope}
\end{tikzpicture}
\caption{The up rule (bent arrows) and the down rule (straight arrows) are compatible but not strongly compatible.}
\label{fig:excompat}
\end{figure}

In order to study probability distributions on the poset $\Omega$ we introduce some
notation.

Let $\ell_1(\Omega)$ denote the Banach space of real-valued
functions $f$ on $\Omega$ such that the norm
$\lVert f \rVert=\sum_{u\in\Omega}\lvert f(u)\rvert$ is finite.
Let
\[
\ell_1(\Omega_i)=\{f\in\ell_1(\Omega)\,:\,
f^{-1}(\RR\setminus\{0\})\subseteq\Omega_i\}
\]
denote the subspace consisting of functions with support on the level set $\Omega_i$
and let
\[
M(\Omega_i)=\{\pi\in\ell_1(\Omega_i)\,:\,\pi(u)\ge0\ \text{for any}\ u\in\Omega_i\ \text{and}\ \sum_{u\in\Omega_i}\pi(u)=1\}
\]
denote the set of probability distributions on $\Omega_i$.

An up or down rule $T$ induces a linear
operator (which, by abuse of notation, also is called $T$)
on $\ell_1(\Omega)$ defined by
\[
(T\pi)(v)=\sum_{u\in\Omega}T(u\rightarrow v)\pi(u).
\]
Stepwise application of this operator defines sequences of probability distributions
with support on one level set at a time, as follows.

\begin{definition}
Let $U$ be an up rule and let $D$ be a down rule on an $I$-graded
poset $\Omega$.
A sequence $(\pi_i\in M(\Omega_i))_{i\in J}$ for some
(possibly infinite) interval $J\subseteq I$
is a \emph{$U$-sequence} if
$U\pi_i=\pi_{i+1}$ for any non-maximal $i\in J$, and it is
a \emph{$D$-sequence} if
$D\pi_i=\pi_{i-1}$ for any non-minimal $i\in J$. 

A $U$- or $D$-sequence $(\pi_i)_{i\in J}$
is \emph{positive} if $\pi_i(u)>0$ for any $i\in J$ and
any $u\in\Omega_i$.
\end{definition}
Clearly,
there is a one-to-one correspondence between $U$-sequences and
$U$-chains and between $D$-sequences and $D$-chains.

\begin{observation}
$U$ and $D$ are compatible if and only if there exists a $U$-sequence that is
also a $D$-sequence.
\end{observation}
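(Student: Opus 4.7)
The plan is to simply unpack the bijective correspondence between chains and sequences of marginal distributions that the text has just asserted. Let me make that correspondence fully explicit first: given a $U$-chain $(X_i)_{i\in I}$, set $\pi_i(u)=\Prob(X_i=u)$; summing the defining identity $\Prob(X_i=u,X_{i+1}=v)=\pi_i(u)U(u\rightarrow v)$ over $u\in\Omega$ yields $\pi_{i+1}(v)=\sum_u U(u\rightarrow v)\pi_i(u)=(U\pi_i)(v)$, so $(\pi_i)$ is a $U$-sequence. Conversely, given a $U$-sequence $(\pi_i)$, define joint distributions on consecutive pairs by the same formula $\pi_i(u)U(u\rightarrow v)$; these agree under marginalization (exactly because $\pi_{i+1}=U\pi_i$), so by Kolmogorov extension there is a unique Markov chain realising them, and it is a $U$-chain. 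The analogous statements hold for $D$-chains and $D$-sequences, with the roles of $i$ and $i+1$ reversed.

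For the forward direction of the observation, assume $U$ and $D$ are compatible, witnessed by a maximal $U$-chain $(X_i)$ and a maximal $D$-chain $(Y_i)$ with $X_i$ and $Y_i$ equally distributed for each $i\in I$. Let $\pi_i$ denote this common distribution. By the correspondence applied to $(X_i)$, the sequence $(\pi_i)$ is a $U$-sequence; applied to $(Y_i)$, it is also a $D$-sequence. Hence the required common sequence exists.

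For the reverse direction, assume $(\pi_i)_{i\in I}$ is both a $U$-sequence and a $D$-sequence. The correspondence produces a maximal $U$-chain $(X_i)$ from the $U$-sequence part and a maximal $D$-chain $(Y_i)$ from the $D$-sequence part, each having marginal $\pi_i$ at time $i$. Therefore $X_i$ and $Y_i$ are equally distributed for every $i\in I$, which is precisely compatibility of $U$ and $D$.

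The proof is essentially a bookkeeping exercise, so there is no serious obstacle; the only point worth pausing over is the construction of a chain from a sequence, which requires one to check that the candidate joint distributions on consecutive pairs are consistent — but this consistency is exactly the identity $\pi_{i+1}=U\pi_i$ (and $\pi_{i-1}=D\pi_i$) built into the definition of a $U$-sequence (respectively a $D$-sequence).
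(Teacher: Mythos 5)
Your proof is correct and takes essentially the same route as the paper, which offers no explicit argument but states the one-to-one correspondence between $U$-chains and $U$-sequences (and between $D$-chains and $D$-sequences) immediately before the observation and treats the claim as a direct consequence. You have simply made that correspondence and its application in both directions explicit, including the (routine but worth noting) consistency check needed to reconstruct a chain from a sequence.
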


\subsection{Up-and-down processes}
Next we turn to alternatingly directed Markov chains on $\Omega$.

Given an up rule $U$ and a down rule $D$, we define
a \emph{\UD-chain} as
a Markov chain with state space $\Omega$ and with
transitions induced by $U$ and $D$ alternately.
\begin{definition}
Let $U$ be an up rule and let $D$ be a down rule on an $I$-graded
poset $\Omega$. A \emph{\UD-chain} is
a Markov chain $(X^{(0)}, X^{(1)}, \dotsc)$ on $\Omega$ such that
\[
\Prob(X^{(t)}=u\ \text{and}\ X^{(t+1)}=v)
=\begin{cases}
\Prob(X^{(t)}=u)U(u\rightarrow v) & \text{if $t$ is even} \\
\Prob(X^{(t)}=u)D(u\rightarrow v) & \text{if $t$ is odd}.
\end{cases}
\]
\end{definition}

Figure~\ref{fig:exupdown} depicts the level sets $\Omega_1=\{u_1,u_2\}$ and $\Omega_2=\{v_1,v_2,v_3\}$ of the poset $\Omega$ in Figure~\ref{fig:exup} with the same up rule $U$ and down rule $D$. The up rule $U$ and the down rule $D$ define a \UD-chain with state space $\Omega_1 \cup \Omega_2$.

\begin{figure}[h]
\begin{tikzpicture}[->,>=stealth',shorten >=1pt,auto,node distance=3cm,semithick]
%\tikzstyle{every state}=[fill=red,draw=none,text=white]
\tikzstyle{every state}=[fill=none,draw=black,text=black,radius=3cm]

%\begin{scope}[node distance=10cm and 2.5cm]

\node[state,inner sep=2pt,minimum size=2pt] (v1) {$v_1$};
\node[state,inner sep=2pt,minimum size=2pt] (u1) [below right of=v1] {
$u_1$
%\begin{ytableau}
%1 & 2 & 3 & 4 \\
%11 & 12 & 1 & n_2 \\
%1 & 2 & \\
%1 & n_k
%\end{ytableau}
};
\node[state,inner sep=2pt,minimum size=2pt] (v2) [above right of=u1] {$v_2$};
\node[state,inner sep=2pt,minimum size=2pt] (u2) [below right of=v2] {$u_2$};
\node[state,inner sep=2pt,minimum size=2pt] (v3) [above right of=u2] {$v_3$};

\path (u1) edge [bend right] node [pos=0.4,above] {$\frac{3}{4}$} (v1)
           edge [bend left] node [pos=0.4,above] {$\frac{1}{4}$} (v2)
      (u2) edge [bend right] node [pos=0.4,above] {$\frac{2}{5}$} (v2)
           edge [bend left] node [pos=0.4,above] {$\frac{3}{5}$} (v3);

\path (v1) edge [bend right] node [pos=0.4,below] {$1$} (u1)
      (v2) edge [bend left] node [pos=0.4,below] {$\frac{3}{4}$} (u1)
      (v2) edge [bend right] node [pos=0.4,below] {$\frac{1}{4}$} (u2)
      (v3) edge [bend left] node [pos=0.4,below] {$1$} (u2);
%\end{scope}
\end{tikzpicture}
\caption{A \UD-chain.}
\label{fig:exupdown}
\end{figure}

\subsection{Results}
\label{sec:compat}
Our first result relates the \UD-process to the compatibility of $U$- and $D$-chains. We state this as two theorems, one about strong compatibility and one about compatibility. These theorems follow almost immediately from the definitions, but nevertheless they can be powerful tools. In Section \ref{sec:limithapes} we use Theorem~\ref{th:seqandstatdistr} to prove a conjecture of Eriksson and Sjöstrand \cite{Eriksson2012575} about the limit shape of a Markov process on Young's lattice.

\begin{theorem}\label{th:chainsandtoggling}
Let $\Omega$ be an $I$-graded poset with an up rule $U$ and a down rule
$D$, and let $(X_i\in\Omega_i)_{i\in I}$ be a Markov chain. The following are equivalent.
\begin{itemize}
\item[(a)]
$(X_i)_{i\in I}$ is both a $U$-chain and a $D$-chain (and hence
$U$ and $D$ are strongly compatible).
\item[(b)]
For any adjacent levels $i,i+1\in I$ it holds that
$X_i,X_{i+1},X_i,X_{i+1},\dotsc$ is a \UD-chain.
\end{itemize}
\end{theorem}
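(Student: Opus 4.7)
The plan is to show that both (a) and (b) reduce to the same pair of identities on the joint distributions of adjacent levels of the chain. Unwinding definitions, $(X_i)_{i\in I}$ is a $U$-chain exactly when $\Prob(X_i = u,\,X_{i+1} = v) = \Prob(X_i = u)\,U(u\rightarrow v)$ for every adjacent pair $i, i+1 \in I$ and every $u \in \Omega_i$, $v \in \Omega_{i+1}$, and a $D$-chain exactly when the same joint equals $\Prob(X_{i+1} = v)\,D(v\rightarrow u)$. Thus condition (a) is the conjunction of these two identities for every adjacent pair, and it is enough to show that (b) amounts to the same.

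For the direction (a)$\Rightarrow$(b), I will construct a \UD-chain $(Z^{(t)})$ by taking $Z^{(0)}$ to have the distribution $\pi_i$ of $X_i$, and then verify by induction on $t$ that $Z^{(2k)}$ is distributed as $X_i$ and $Z^{(2k+1)}$ as $X_{i+1}$, with the adjacent joints $(Z^{(t)},\,Z^{(t+1)})$ matching $(X_i,X_{i+1})$ when $t$ is even and $(X_{i+1},X_i)$ when $t$ is odd. The inductive step uses the $U$-chain identity to pass from an even time to the next odd time, and the $D$-chain identity to pass back; the marginal stability $U\pi_i = \pi_{i+1}$ and $D\pi_{i+1} = \pi_i$ needed for the induction is obtained by summing each joint identity over the unused coordinate.

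Conversely, (b)$\Rightarrow$(a) is immediate by reading off the \UD-chain's defining two-step joints: at $t=0$ (even, governed by $U$) this yields the $U$-chain identity on the pair $(X_i,X_{i+1})$, and at $t=1$ (odd, governed by $D$, applied to $(Z^{(1)},Z^{(2)})$ distributed as $(X_{i+1},X_i)$) it yields the $D$-chain identity.

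The only real subtlety --- and in practice the one point worth stating carefully --- is how to read the sequence notation in (b): $X_i, X_{i+1}, X_i, X_{i+1}, \dotsc$ must be interpreted distributionally, as the claim that the \UD-chain started from $\pi_i$ has alternating marginal distributions $\pi_i, \pi_{i+1}, \pi_i, \pi_{i+1}, \dotsc$ with the corresponding adjacent joints. Once that interpretation is fixed, no technical obstacle remains, in agreement with the authors' remark that the theorem follows almost immediately from the definitions.
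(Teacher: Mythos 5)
Your argument is correct and is exactly the route the paper takes: its proof is the one-line ``follows directly from the definitions,'' and your write-up simply spells out that unwinding, including the (worthwhile) remark that the sequence $X_i,X_{i+1},X_i,\dotsc$ in (b) must be read distributionally. Nothing to add.
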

\begin{proof}
This follows directly from the definitions.
\end{proof}
In other words, Theorem~\ref{th:chainsandtoggling} says that if and only if random
variables can be defined on each level set such that they correspond both to a
Markov chain following the up rule and a Markov chain following the down rule, they
also correspond to all Markov chains following the up and down rule alternatingly.

\begin{theorem}\label{th:seqandstatdistr}
Let $\Omega$ be an $I$-graded poset with an up rule $U$ and a down rule
$D$, and let $(\pi_i\in M(\Omega_i))_{i\in I}$ be a sequence of
probability distributions. The following are equivalent.
\begin{itemize}
\item[(a)]
$(\pi_i)_{i\in I}$ is both a $U$-sequence and a $D$-sequence (and hence
$U$ and $D$ are compatible).
\item[(b)]
For any adjacent levels $i,i+1\in I$ it holds that
$(\pi_i+\pi_{i+1})/2$ is a stationary distribution of
the \UD-process.
\end{itemize}
\end{theorem}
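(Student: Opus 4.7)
The plan is to interpret the UD-process restricted to two adjacent levels $\Omega_i\cup\Omega_{i+1}$ as a time-homogeneous Markov chain (the position toggles, so from $\Omega_i$ one applies $U$, from $\Omega_{i+1}$ one applies $D$) and to characterize its stationary distributions explicitly. Once the explicit form is found, the equivalence (a)$\Leftrightarrow$(b) reduces to matching two lists of conditions.

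\textbf{Step 1: the form of stationary distributions.} Write an arbitrary probability distribution on $\Omega_i\cup\Omega_{i+1}$ as $\mu=\alpha\mu_i+(1-\alpha)\mu_{i+1}$ with $\mu_i\in M(\Omega_i)$, $\mu_{i+1}\in M(\Omega_{i+1})$ and $\alpha\in[0,1]$. The one-step image under the UD-process is $\alpha U\mu_i+(1-\alpha)D\mu_{i+1}$, and since $U\mu_i\in M(\Omega_{i+1})$ and $D\mu_{i+1}\in M(\Omega_i)$, stationarity $\mu=\alpha U\mu_i+(1-\alpha)D\mu_{i+1}$ decomposes according to level as
\[
\alpha\mu_i=(1-\alpha)D\mu_{i+1},\qquad (1-\alpha)\mu_{i+1}=\alpha U\mu_i.
\]
Taking $\ell_1$-norms and using that $U$ and $D$ preserve the norms of probability distributions on non-extremal levels, both equations force $\alpha=1-\alpha$, i.e.\ $\alpha=1/2$. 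Thus every stationary distribution of the UD-process on $\Omega_i\cup\Omega_{i+1}$ has the form $(\mu_i+\mu_{i+1})/2$ with
\[
U\mu_i=\mu_{i+1}\qquad\text{and}\qquad D\mu_{i+1}=\mu_i.
\]

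\textbf{Step 2: matching the conditions.} Apply Step 1 with $\mu_i=\pi_i$ and $\mu_{i+1}=\pi_{i+1}$. Then $(\pi_i+\pi_{i+1})/2$ is stationary for the UD-process on levels $i,i+1$ precisely when $U\pi_i=\pi_{i+1}$ and $D\pi_{i+1}=\pi_i$. Ranging over all adjacent pairs $i,i+1\in I$, these are exactly the defining conditions that $(\pi_i)_{i\in I}$ is simultaneously a $U$-sequence and a $D$-sequence. Appealing to the observation following the definition of compatibility then yields the parenthetical compatibility conclusion in (a).

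\textbf{Main obstacle.} The content is genuinely light, and the only subtlety lies in Step 1, namely justifying that the weight $\alpha$ between the two levels is forced to be $1/2$. This is exactly where the rank structure enters: because $U$ maps $\Omega_i$-mass to $\Omega_{i+1}$-mass and $D$ does the reverse, the two pieces of the stationarity equation are supported on disjoint level sets and must balance separately, and norm preservation then pins down $\alpha$. No genuine analytic difficulty arises — the operators act between the finite-norm subspaces $\ell_1(\Omega_i)$ and $\ell_1(\Omega_{i+1})$ in the obvious way, so the argument is purely algebraic.
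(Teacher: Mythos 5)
Your proof is correct and follows essentially the same route as the paper's: the paper's entire argument is the observation that $(\pi_i+\pi_{i+1})/2$ is stationary for the \UD-process on levels $i,i+1$ if and only if $U\pi_i=\pi_{i+1}$ and $D\pi_{i+1}=\pi_i$, which is exactly your Steps 1--2 (you merely spell out the level-wise decomposition and the norm argument forcing $\alpha=1/2$, which the paper leaves implicit). No substantive difference.
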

\begin{proof}
Note that $(\pi_i+\pi_{i+1})/2$ is a stationary distribution
of the \UD-process if and only if $U\pi_i=\pi_{i+1}$ and $D\pi_{i+1}=\pi_i$.
Now the theorem follows directly from the definitions.
\end{proof}
In words, Theorem~\ref{th:seqandstatdistr} says that if and only if a given sequence of
probability distributions is both a $U$-sequence and a $D$-sequence, the average
(in $\ell_1(\Omega)$) of two adjacent such distributions is a stationary distribution of the \UD-process.

The second part of this paper will answer the following questions that arise naturally from the notion of compatibility.

\begin{itemize}
\item[Q1.]
Given an up rule $U$ and a $U$-chain $(X_i)$,
is there a down-rule $D$ such that $(X_i)$ is a $D$-chain?
\item[Q2.]
Given an up or down rule $T$, is there a maximal $T$-chain?
\end{itemize}

The answer to Q1 is that every \emph{positive} $U$-chain
is a $D$-chain for some $D$.
This is Theorem~\ref{th:existencedownrule} in Section \ref{sec:result2} and we give a constructive proof using Bayesian updating of probability distributions between adjacent levels.

The answer to Q2 is more complex and
is presented as three theorems in Section~\ref{sec:result3}.
If all level sets $\Omega_i$ are finite, it turns out that there always exists a
$T$-chain, but not necesserily a positive one.
The proofs rely on topological arguments and require the axiom of choice.

\subsection{Applications}
We will apply the results to processes on two particular posets:
(i) Young's lattice; and (ii) the $d$-dimensional nonnegative
integer lattice.

In \cite{Eriksson2012575}, Eriksson and Sjöstrand study stochastic
processes on Young diagrams of integer partitions, in particular
their limit shapes. 
In the framework of this paper, these processes are up processes and
up-and-down processes, respectively, where the underlying poset is Young's
lattice. A motivation for this work is a limit shape conjecture in
\cite{Eriksson2012575} for a certain up-and-down process (described
in Section \ref{sec:rowderow} below). Using the results in the current
paper, in particular Theorem~\ref{th:seqandstatdistr}, we will prove this conjecture.
This is done in Corollary~\ref{cor:iff2} in Section \ref{sec:limithapes}.

Applications of our theory to the $d$-dimensional nonnegative integer
lattice are presented in Section \ref{sec:lattice}.

\subsection{An application of Theorem~\ref{th:seqandstatdistr} to a process on Young's lattice}
\label{sec:rowderow}
One of the processes studied in \cite{Eriksson2012575}, called $\textsc{derow-row}(\mu)$, is a \UD-chain on Young's lattice. 
We will first introduce some notation for integer partitions and Young diagrams and then describe the up rule $\textsc{row}(\mu)$ and the down rule \textsc{derow} used in this process.

\subsubsection{Notation}
An introduction to the theory of integer partitions can be found in \cite{andrews2004integer}.

With $\mathcal{P}_n$ we mean the set of all partitions of the positive integer $n$. For $\lambda  \in\mathcal{P}_n$, we write
%also write $\lambda\vdash n$ and 
$|\lambda|=n$. Denote the parts of the partition by $\lambda = (\lambda_1,\lambda_2,\dotsc,\lambda_N)$ where $\lambda_1 \geq \lambda_2 \geq \dotsb \geq \lambda_N$ and $N=N(\lambda)$ is the number of parts of $\lambda$. Let $r_i=r_i(\lambda)\geq 0$ denote the number of parts of size $i$. Thus, $N=\sum_{i=1}^{n}r_i$ and $n=\sum_{i=1}^N \lambda_i=\sum_{i=1}^{n}ir_i$.

An integer partition $\lambda$ can be represented by a Young diagram drawn as left-aligned rows of squares in the first quadrant such that the $i$th row from the bottom has length $\lambda_i$. The diagrams in the first three levels of Young's lattice can be seen in Figure~\ref{fig:youngslattice}. With $\lambda\in\mathcal{P}_n$ we mean either the partition or its corresponding Young diagram.

\ytableausetup{boxsize=8pt}
\begin{figure}[h]
\begin{tikzpicture}[>=stealth',shorten >=1pt,auto,node distance=3cm,semithick,scale=1.6]
\tikzstyle{every state}=[fill=none,draw=black,text=black,radius=3cm]

\node[inner sep=2pt,minimum size=2pt] (41) at (-2,3){
\begin{ytableau}
 \\
 \\
 \\
 \\
\end{ytableau}
};
\node[inner sep=2pt,minimum size=2pt] (31) at (-1,2) {
\begin{ytableau}
 \\
 \\
 \\
\end{ytableau}
};
\node[inner sep=2pt,minimum size=2pt] (42) at (-1,3) {
\begin{ytableau}
 \\
 \\
 & \\
\end{ytableau}
};
\node[inner sep=2pt,minimum size=2pt] (32) at (0,2) {
\begin{ytableau}
 \\
 \, & \\
\end{ytableau}
};
\node[inner sep=2pt,minimum size=2pt] (43) at (0,3) {
\begin{ytableau}
\, & \\
 & \\
\end{ytableau}
};
\node[inner sep=2pt,minimum size=2pt] (44) at (1,3) {
\begin{ytableau}
 \\
 & & \\
\end{ytableau}
};
\node[inner sep=2pt,minimum size=2pt] (33) at (1,2) {
\begin{ytableau}
\, & & \\
\end{ytableau}
};
\node[inner sep=2pt,minimum size=2pt] (45) at (2,3) {
\begin{ytableau}
\, & & & \\
\end{ytableau}
};
\node[inner sep=2pt,minimum size=2pt] (21) at (-0.5,1.1) {
\begin{ytableau}
 \\
 \\
\end{ytableau}
};
\node[inner sep=2pt,minimum size=2pt] (22) at (0.5,1.1) {
\begin{ytableau}
\, & \\
\end{ytableau}
};
\node[inner sep=2pt,minimum size=2pt] (0) at (0,0.3) {
\begin{ytableau}
 \\
\end{ytableau}
};
\path (0) edge node {} (21);
\path (0) edge node {} (22);
\path (21) edge node {} (31);
\path (21) edge node {} (32);
\path (22) edge node {} (32);
\path (22) edge node {} (33);
\path (31) edge node {} (41);
\path (31) edge node {} (42);
\path (32) edge node {} (42);
\path (32) edge node {} (43);
\path (32) edge node {} (44);
\path (33) edge node {} (44);
\path (33) edge node {} (45);
\end{tikzpicture}
\caption{Young's lattice.}
\label{fig:youngslattice}
\end{figure}

\subsubsection{Description of the process}
To describe the \UD-chain $\textsc{derow-row}(\mu)$, we need a tool introduced
in \cite{Eriksson2012575}, used to associate squares with row-lengths and
row-lengths with corners in a Young diagram.

\begin{definition}
Consider some given Young diagram $\lambda$. For any of its squares $s$ let $\kappa(s)$ denote the length of the row to which $s$ belongs. If $\kappa$ is a row length, let $\omega(\kappa)$ and $\iota(\kappa)$ denote the unique outer corner and inner corner, respectively, for which the row coordinate is $\kappa$:
\begin{align*}
\omega(\kappa) & = \left( \kappa, \max\{i\,|\,\lambda_i=\kappa\} \right), \\
\iota(\kappa) & =
\begin{cases}
\left( \kappa, \max\{i\,|\,\lambda_i>\kappa\} \right) & \text{if }\kappa<\lambda_1 \\
(\lambda_1,0) & \text{if }\kappa=\lambda_1
\end{cases}
.
\end{align*}
\end{definition}

Consider a current Young diagram $\lambda$. The action of the down rule \textsc{derow} is defined by choosing a non-empty row $i$ uniformly at random and removing the corresponding outer corner $\omega(\lambda_i)$.

The action of the up rule $\textsc{row}(\mu)$ is defined as follows: With probability $\mu$ create a new row of length 1. Otherwise make a uniformly random choice of a row $i$ among the $N(\lambda)$ non-empty rows and insert a new square at the corresponding inner corner $\iota(\lambda_i)$.

The process $\textsc{derow-row}(\mu)$ is the up-and-down process on $\mathcal{P}_n$ (for some $n\ge 2$) using these up and down rules.

\subsubsection{The stationary distributions}
The following result is a refinement of equations (12) and (13) in
\cite{Eriksson2012575} for the stationary distributions of the process $\textsc{derow-row}(\mu)$.

\begin{lemma}
\label{lem:derowrow}
For any $0<\mu<1$, the stationary distribution over the partitions in $\mathcal{P}_i\cup\mathcal{P}_{i+1}$ in the process
$\textsc{row-derow}(\mu)$ is
%Let $U$ be the up rule $\textsc{row}(\mu)$ and $D$ the down rule \textsc{derow} on Young's lattice. For $i\in\PP$, the stationary distribution ${\pi}_i^{\UD}$ on $\mathcal{P}_i\cup\mathcal{P}_{i+1}$ is given by
\begin{equation}
\label{eq:derowrowlem:pevenpodd}
{\pi}_i^{\UD}(\lambda)= \frac12(1-\mu)^{|\lambda|-N(\lambda)}\mu^{N(\lambda)-1}\frac{N(\lambda)!}{\prod_k r_k(\lambda)!}.
\end{equation}
\end{lemma}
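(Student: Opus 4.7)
The strategy is to invoke Theorem~\ref{th:seqandstatdistr}: it suffices to exhibit, for each $n\ge 1$, a probability distribution $\pi_n\in M(\mathcal{P}_n)$ such that the sequence $(\pi_n)_n$ is simultaneously a $\textsc{row}(\mu)$-sequence and a $\textsc{derow}$-sequence, for then $\pi_i^{\UD}=(\pi_i+\pi_{i+1})/2$ is stationary for the UD-process on $\mathcal{P}_i\cup\mathcal{P}_{i+1}$, which is exactly the claimed formula. Extracting the candidate from the target, set
\[
\pi_n(\lambda) := (1-\mu)^{n-N(\lambda)}\mu^{N(\lambda)-1}\frac{N(\lambda)!}{\prod_k r_k(\lambda)!}, \qquad \lambda\in\mathcal{P}_n.
\]

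\textbf{Steps.} First, that $\pi_n\in M(\mathcal{P}_n)$ follows because $N(\lambda)!/\prod_k r_k(\lambda)!$ counts the distinct orderings of the parts of $\lambda$, so summing $\pi_n$ over $\mathcal{P}_n$ re-indexes the sum over compositions of $n$ into $N$ positive parts and grouping by $N$ yields $\sum_{N=1}^{n}\binom{n-1}{N-1}(1-\mu)^{n-N}\mu^{N-1}=1$ by the binomial theorem. Next verify $\textsc{row}(\mu)\,\pi_n=\pi_{n+1}$: fix $\lambda'\in\mathcal{P}_{n+1}$ and enumerate predecessors $\lambda\lessdot\lambda'$ by the distinct part sizes $k$ appearing in $\lambda'$. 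For $k\ge 2$, the predecessor replaces a length-$k$ row by a length-$(k-1)$ row, with transition probability $(1-\mu)(r_{k-1}(\lambda')+1)/N(\lambda')$; for $k=1$, the predecessor deletes a row of length~$1$, with transition probability $\mu$. In both cases a direct computation of $\pi_n(\lambda)/\pi_{n+1}(\lambda')$ (using that the multinomial coefficients differ by the factor $r_k(\lambda')/(r_{k-1}(\lambda')+1)$ or $r_1(\lambda')$ respectively) makes the contribution collapse to $r_k(\lambda')\pi_{n+1}(\lambda')/N(\lambda')$. Summing over $k$ and using $\sum_k r_k(\lambda')=N(\lambda')$ gives $\pi_{n+1}(\lambda')$. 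The identity $\textsc{derow}\,\pi_{n+1}=\pi_n$ is verified similarly: the successors of $\lambda$ are of type (A), appending a new row of length $1$, or of type (B), extending a row of some length $k\ge 1$ with $r_k(\lambda)\ge 1$ to length $k+1$. The type~(B) contributions telescope to $(1-\mu)\pi_n(\lambda)$ and the type~(A) contribution evaluates to $\mu\pi_n(\lambda)$; adding the two gives $\pi_n(\lambda)$.

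\textbf{Main obstacle.} There is no conceptual obstacle once the closed form for $\pi_n$ is guessed from the target; the work is in the bookkeeping. Keeping track of how $N(\lambda)$ and the multiplicities $r_k(\lambda)$ shift between $\lambda$ and its covers, and handling separately the exceptional $k=1$ transitions (which carry a factor of $\mu$ rather than $1-\mu$ under $\textsc{row}(\mu)$), are the main sources of potential error.
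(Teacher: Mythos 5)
Your proof is correct, but it takes a genuinely different route from the paper. The paper's proof is essentially a citation: it quotes the stationary distribution from equations (12) and (13) of \cite{Eriksson2012575} up to an undetermined constant $c_i$, and then pins down $c_i=(1-\mu)^{i+1}/\mu$ from the recursion $c_{i+1}/(1-\mu)=c_i$ and the base case $c_1$. You instead give a self-contained verification: you extract the candidate $\pi_n(\lambda)=(1-\mu)^{n-N(\lambda)}\mu^{N(\lambda)-1}N(\lambda)!/\prod_k r_k(\lambda)!$, check normalization by re-indexing over compositions ($\sum_N\binom{n-1}{N-1}(1-\mu)^{n-N}\mu^{N-1}=1$), verify $U\pi_n=\pi_{n+1}$ and $D\pi_{n+1}=\pi_n$ by the local multiplicity bookkeeping, and invoke Theorem~\ref{th:seqandstatdistr}. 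I checked the local computations (the collapse of each predecessor's contribution to $r_k(\lambda')\pi_{n+1}(\lambda')/N(\lambda')$, and the split $(1-\mu)\pi_n(\lambda)+\mu\pi_n(\lambda)$ on the down side) and they are right; the only slip is cosmetic --- in the $k=1$ up-step the full multinomial coefficients differ by $N(\lambda')/r_1(\lambda')$, not $r_1(\lambda')$, though your final contribution is stated correctly. What your approach buys is independence from the external reference and, as a byproduct, a direct proof of the compatibility claim in Corollary~\ref{cor:iff}; what the paper's approach buys is brevity. Neither proof addresses why this is \emph{the} (unique) stationary distribution rather than \emph{a} stationary distribution, so you are not held to a higher standard there.
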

\begin{proof}
Up to a normalization constant, ${\pi}_i^{\UD}(\lambda)$
is given in (12) and (13) in \cite{Eriksson2012575}, namely,
\begin{equation}
\label{eq:derowrow_even}
{\pi}_{i}^{\UD}(\lambda) =
\frac{ \bigl( \frac{\mu}{1-\mu} \bigr)^{N(\lambda)} N(\lambda)! }{\prod_k r_k(\lambda)!}\cdot
\begin{cases}
\frac12c_i
%\frac12c_i \frac{ \left( \frac{\mu}{1-\mu} \right)^{N(\lambda)} N(\lambda)! %}{\prod_k r_k(\lambda)!}
& \text{if $|\lambda|=i+1$,} \\
\frac12c_i/(1-\mu)
%\frac12c_i \frac{ \left( \frac{\mu}{1-\mu} \right)^{N(\lambda')} %N(\lambda')! }{(1-\mu)\prod_k r_k(\lambda')!}
& \text{if $|\lambda|=i$.}
\end{cases}
\end{equation}
for some constant $c_i$ (independent of $\lambda$).
It follows that $c_{i+1}/(1-\mu)=c_i$ for each $i$, and since
$c_1=(1-\mu)^2/\mu$ we must have $c_i=(1-\mu)^{i+1}/\mu$.
\end{proof}

%\subsection{A corollary to Theorem~\ref{th:seqandstatdistr}}
%\label{sec:corollaries}
Combining the lemma with Theorem~\ref{th:seqandstatdistr} yields a formula for
the distributions of the $U$-chain induced by the up rule $\textsc{row}(\mu)$.
\begin{corollary}
\label{cor:iff}
The up rule $\textsc{row}(\mu)$ and the down rule $\textsc{derow}$ are compatible and
the distribution of the $\textsc{row}(\mu)$ process on $\mathcal{P}_i$ is given by the probability function
\[
p(\lambda) = (1-\mu)^{i-N(\lambda)}\mu^{N(\lambda)-1}\frac{N(\lambda)!}{\prod_k r_k(\lambda)!}.
\]
\end{corollary}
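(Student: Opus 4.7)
The plan is to apply Theorem~\ref{th:seqandstatdistr} with $\pi_i = p_i$. Define the function $p_i\colon\mathcal{P}_i\to\RR$ by the formula in the statement. Comparing with Lemma~\ref{lem:derowrow}, one sees immediately that for $\lambda\in\mathcal{P}_i$ we have $\pi_i^{\UD}(\lambda)=\tfrac12 p_i(\lambda)$, while for $\lambda\in\mathcal{P}_{i+1}$ we have $\pi_i^{\UD}(\lambda)=\tfrac12 p_{i+1}(\lambda)$. Thus, as functions on $\mathcal{P}_i\cup\mathcal{P}_{i+1}$, we have the pointwise identity $(p_i+p_{i+1})/2=\pi_i^{\UD}$.

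Before invoking Theorem~\ref{th:seqandstatdistr}, however, we need $p_i\in M(\mathcal{P}_i)$, i.e.\ $p_i$ must be a probability distribution. Nonnegativity is clear from $0<\mu<1$, so the task is to show $\sum_{\lambda\in\mathcal{P}_i}p_i(\lambda)=1$. I would group the partitions $\lambda\in\mathcal{P}_i$ by their number of parts $N=N(\lambda)$. For fixed $N$, partitions of $i$ with $N$ parts correspond bijectively, via the multinomial coefficient $N!/\prod_k r_k!$, to compositions of $i$ into exactly $N$ positive parts, of which there are $\binom{i-1}{N-1}$. Substituting gives
\[
\sum_{\lambda\in\mathcal{P}_i}p_i(\lambda)=\sum_{N=1}^{i}(1-\mu)^{i-N}\mu^{N-1}\binom{i-1}{N-1}=\bigl((1-\mu)+\mu\bigr)^{i-1}=1
\]
by the binomial theorem. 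This is the one piece of actual calculation.

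With $p_i\in M(\mathcal{P}_i)$ verified for every $i$, the identity $(p_i+p_{i+1})/2=\pi_i^{\UD}$ together with Lemma~\ref{lem:derowrow} shows that $(p_i+p_{i+1})/2$ is a stationary distribution of the $\UD$-process on adjacent levels $i,i+1$, for every $i\ge 1$. By Theorem~\ref{th:seqandstatdistr} applied with $\pi_i=p_i$, the sequence $(p_i)_{i\ge 1}$ is simultaneously a $\textsc{row}(\mu)$-sequence and a \textsc{derow}-sequence. Hence $\textsc{row}(\mu)$ and \textsc{derow} are compatible, and $p_i$ is the distribution on $\mathcal{P}_i$ of the corresponding $\textsc{row}(\mu)$-chain (starting from $p_1$, which is trivially the point mass at the single-box partition).

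The main obstacle is the normalization step: there is nothing in Theorem~\ref{th:seqandstatdistr} or Lemma~\ref{lem:derowrow} that automatically guarantees that the two level-sums $\sum_{\mathcal{P}_i}p_i$ and $\sum_{\mathcal{P}_{i+1}}p_{i+1}$ are each equal to $1$ — the lemma only gives that their average is $1$. The binomial computation above is exactly what closes this gap, after which the rest is bookkeeping.
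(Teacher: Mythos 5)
Your proof is correct and follows essentially the same route as the paper, which derives the corollary by combining Lemma~\ref{lem:derowrow} with Theorem~\ref{th:seqandstatdistr}. Your explicit verification that each $p_i$ sums to $1$ over $\mathcal{P}_i$ (via the composition count $\binom{i-1}{N-1}$ and the binomial theorem) is a detail the paper leaves implicit, and it is carried out correctly.
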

In fact the derivation of equations (12) and (13) in \cite{Eriksson2012575} reveals that the stationary $\textsc{derow-row}(\mu)$ process is reversible, and hence, by Theorem~\ref{th:chainsandtoggling},
$\textsc{row}(\mu)$ and \textsc{derow} are strongly compatible.

\section{Limiting objects}
\label{sec:limithapes}
The original motivation for the current work has been the study of limiting objects, in particular limit shapes of Young diagrams under stochastic processes as initiated in \cite{Eriksson2012575}. As a corollary to Theorem~\ref{th:seqandstatdistr}, we can now prove a limit shape conjecture in \cite{Eriksson2012575} (Conjecture 2).

First, let us remind ourselves what is meant by a \emph{scaling} of a Young diagram. As the number of squares $n$ grows we need to rescale the diagram to achieve any limiting behaviour. Following \cite{Eriksson2012575} and \cite{VershikStatMech}, a diagram is rescaled using a \emph{scaling factor} $a_n>0$ such that all row lengths are multiplied by $1/a_n$ and all column heights are multiplied by $a_n/n$, yielding a constant diagram area of 1.

\begin{corollary}
\label{cor:iff2}
Let $\mu_n\log(\mu_n n)\rightarrow 0$ and $\mu_n n\rightarrow\infty$ as $n\rightarrow\infty$ and choose the scaling $a_n=1/\mu_n$. Then the stationary distribution of the $\textsc{derow-row}(\mu_n)$ process has the limit shape
\[
y(x)=e^{-x}.
\]
\end{corollary}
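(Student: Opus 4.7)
The plan is to exploit the explicit formula for the stationary distribution together with a coin-flipping picture, then convert the resulting column-height law into the boundary curve. By Corollary~\ref{cor:iff} and the strong-compatibility remark immediately following it, the stationary distribution of $\textsc{derow-row}(\mu_n)$ restricted to level $\mathcal{P}_n$ (and analogously to $\mathcal{P}_{n+1}$) is
\[
p(\lambda)=(1-\mu_n)^{n-N(\lambda)}\,\mu_n^{N(\lambda)-1}\,\frac{N(\lambda)!}{\prod_k r_k(\lambda)!}.
\]
The first step is to recognise $p$ as the following cut model: place independent Bernoulli$(\mu_n)$ cuts in each of the $n-1$ gaps between cells $1,\dots,n$, read off the resulting composition of $n$, and forget the order of its parts. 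Any composition with $N$ parts arises with probability $\mu_n^{N-1}(1-\mu_n)^{n-N}$ and realises a given $\lambda$ in exactly $N(\lambda)!/\prod_k r_k(\lambda)!$ ways, so the two distributions coincide.

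Next, I would analyse the column height $R_k(\lambda):=\#\{j:\lambda_j\ge k\}$ in this model. Writing $B_i$ for the event that a block of length $\ge k$ begins at cell $i$, one has $R_k=\sum_{i=1}^{n-k+1}\mathbf{1}_{B_i}$ with $\mathbb{P}(B_1)=(1-\mu_n)^{k-1}$ and $\mathbb{P}(B_i)=\mu_n(1-\mu_n)^{k-1}$ for $2\le i\le n-k+1$, so
\[
\mathbb{E}[R_k]=\bigl(1+(n-k)\mu_n\bigr)(1-\mu_n)^{k-1}.
\]
The events $B_i,B_j$ are mutually exclusive for $0<|i-j|<k$ (a ``no cut'' constraint of one contradicts a ``cut'' constraint of the other) and independent for $|i-j|\ge k$, so all pairwise covariances are nonpositive and $\mathrm{Var}(R_k)\le\mathbb{E}[R_k]=O(\mu_n n)$. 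Chebyshev then gives $(\mu_n n)^{-1}R_k\to(1-\mu_n)^{k-1}$ in probability since $\mu_n n\to\infty$.

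To turn this into the limit shape, I would set $k=\lceil x/\mu_n\rceil$ and use the expansion $\log(1-\mu_n)=-\mu_n(1+O(\mu_n))$ to obtain $(1-\mu_n)^{x/\mu_n}=e^{-x}(1+O(x\mu_n))$; hence the scaled column height $(\mu_n n)^{-1}R_{\lceil x/\mu_n\rceil}$ converges pointwise in probability to $e^{-x}$. For uniformity on the (random) support of the scaled diagram, the union bound $\mathbb{P}(\lambda_1\ge L)\le n\mu_n(1-\mu_n)^{L-1}$ yields $\lambda_1=O_{\mathbb{P}}(\mu_n^{-1}\log(\mu_n n))$, bounding the scaled support by $C\log(\mu_n n)$; on this range the error $O(x\mu_n)\le O(\mu_n\log(\mu_n n))$ vanishes by hypothesis.

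The principal technical obstacle I foresee is packaging these ingredients into the exact notion of ``limit shape'' used in~\cite{Eriksson2012575}---presumably uniform convergence in probability of the rescaled boundary curve---and handling the random right-hand endpoint of the scaled support. The two hypotheses play complementary roles: $\mu_n n\to\infty$ supplies the Chebyshev concentration of the column heights, while $\mu_n\log(\mu_n n)\to 0$ simultaneously controls both the Taylor remainder in $(1-\mu_n)^{x/\mu_n}$ and the scaled support, keeping the approximation uniform on the entire rescaled diagram.
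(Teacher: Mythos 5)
Your argument is correct, but it takes a genuinely different route from the paper. The paper's proof is two lines: by Corollary~\ref{cor:iff} the stationary distribution of $\textsc{derow-row}(\mu_n)$ on each level coincides with the distribution of the $\textsc{row}(\mu_n)$ up process, and Theorem~4 of \cite{Eriksson2012575} already establishes the limit shape $e^{-x}$ for that distribution under exactly the hypotheses $\mu_n n\to\infty$ and $\mu_n\log(\mu_n n)\to 0$; the corollary then follows immediately. You perform the same first step (the identification via Corollary~\ref{cor:iff}) but then re-derive the limit shape from scratch instead of citing the external theorem: your Bernoulli-cut representation of $p(\lambda)$ is exactly right (each composition with $N$ parts has probability $\mu_n^{N-1}(1-\mu_n)^{n-N}$ and a partition is realised by $N(\lambda)!/\prod_k r_k(\lambda)!$ orderings, with normalisation $\sum_N\binom{n-1}{N-1}\mu_n^{N-1}(1-\mu_n)^{n-N}=1$), your first- and second-moment computations for $R_k$ are correct (the covariance sign analysis via mutual exclusivity for $|i-j|<k$ and independence for $|i-j|\ge k$ is the right observation), and the tail bound on $\lambda_1$ correctly isolates where the hypothesis $\mu_n\log(\mu_n n)\to 0$ is used. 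What the paper's route buys is brevity and reuse of prior work; what yours buys is a self-contained proof that makes transparent which hypothesis drives which estimate ($\mu_n n\to\infty$ for concentration, $\mu_n\log(\mu_n n)\to 0$ for uniformity over the support), effectively reconstructing the content of the cited Theorem~4. The one piece you leave as ``packaging''---upgrading pointwise convergence in probability to uniform convergence against the reference's notion of limit shape---is indeed routine given that $k\mapsto R_k$ is nonincreasing and the limit $e^{-x}$ is continuous and decreasing, so this is not a substantive gap.
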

\begin{proof}
%Let $\mu=\mu_n$ for some fixed $n\ge 1$.
By Corollary \ref{cor:iff}, the stationary distribution for $\textsc{derow-row}(\mu)$ equals the distribution for $\textsc{row}(\mu)$. By Theorem 4 in \cite{Eriksson2012575}, the limit shape for this distribution with $\mu=\mu_n$ is $y=e^{-x}$ as long as $\mu_n\log(\mu_n n)\rightarrow 0$ and $\mu_n n\rightarrow\infty$ as $n\rightarrow\infty$. Therefore, under these conditions the limit shape for $\textsc{derow-row}(\mu_n)$ must also be $y=e^{-x}$.
\end{proof}

\subsection{The asymptotics of $\mu_n$ in $\textsc{derow-row}(\mu_n)$}
Theorem 4 in \cite{Eriksson2012575} has the conditions $\mu_n n\rightarrow\infty$ and $\mu_n\log(\mu_n n)\rightarrow 0$ as $n\rightarrow\infty$. After scaling a Young diagram with scaling factor $a_n=1/{\mu_n}$, each square will have width $1/a_n=\mu_n$ and height $a_n/n=\frac{1}{\mu_n n}$. In order for the boundary of the Young diagram (which is the object whose limiting behaviour one considers) to resolve properly as $n\rightarrow\infty$, we must at least have
\begin{equation}
\label{eq:muasymptotics}
\mu_n\rightarrow 0 \;\text{ and }\; \mu_n n\rightarrow\infty \;\text{ as }\; n\rightarrow\infty.
\end{equation}

However, Theorem 4 in \cite{Eriksson2012575} uses the stronger assumption $\mu_n\log(\mu_n n)\rightarrow 0$ as $n\rightarrow\infty$. It is still to be investigated whether this can be relaxed in Corollary \ref{cor:iff2}.

\subsection{A generalization of limiting objects}
One may also generalize the concepts of limiting object and limit distribution. In order to be able to talk about a generic process having a limiting object and a limit distribution, let $S$ be a separable metric space, called a \emph{limit space}, with the metric $d_S$. 

For $n=0,1,\dotsc$ let $R_n$ be the countable set of states reachable after $n$ steps, and let $f_n\colon R_n\to S$ be a ``scaling'' function. A stochastic process $(X_n\in R_n)_{n=0}^\infty$ having a limit distribution (with respect to the metric $d_S$) corresponds to the convergence of the random variables $\{f_n(X_n)\}$ \emph{in distribution} to a random variable $X\in S$; the distribution of $X$ is the limit distribution.

Further, if the limit distribution has all probability mass concentrated in a single point in $S$, in other words, if $X$ is constant, then this point is the \emph{limiting object} of the process (in which case we may equivalently talk about convergence \emph{in probability} of $\{f_n(X_n)\}$ to the limiting object).

In this paper, we have made this concept tangible by studying processes on Young's lattice, where the limit space is the set of decreasing functions in the first quadrant with integral 1. In the next section, we will study processes on the $d$-dimensional nonnegative integer lattice $\NN^d$ where the limit space is the set of nonnegative real $d$-tuples adding to 1. We see a further possible application to this generalization in the study of limits of permutation sequences (see for example \cite{hoppen2013limits}). In this case $S$ is the set of Lebesgue measurable functions $[0,1]^2 \rightarrow [0,1]$ with certain properties.

\newpage
\section{Given an up rule, is there a compatible down rule?}
\label{sec:result2}
In this section we will prove the existence of a down rule compatible with a given up rule. The proof is constructive and we will use the construction in some examples.

\begin{theorem}\label{th:existencedownrule}
Let $U$ be an up rule on an $I$-graded poset $\Omega$, and let
$(X_i)_{i\in I}$ be a $U$-chain. Then
$(X_i)$ is a $D$-chain for some down rule $D$ on $\Omega$ if and
only if $\Prob(X_i=u)=0$ implies $\Prob(X_{i+1}=v)=0$ for any
$i,i+1\in I$ and any $\Omega_i\ni u\lessdot v\in\Omega_{i+1}$.
In particular this holds when $(X_i)$ is positive.
\end{theorem}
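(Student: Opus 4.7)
The plan is to prove necessity directly from the definitions and sufficiency by Bayesian inversion of $U$, edge by edge in the Hasse diagram. For \emph{necessity}, suppose $(X_i)$ is a $D$-chain for some down rule $D$. Fix $u \in \Omega_i$ and $v \in \Omega_{i+1}$ with $u \lessdot v$. The defining identity of a $D$-chain gives
\[
\Prob(X_i = u \text{ and } X_{i+1} = v) = \Prob(X_{i+1} = v)\, D(v \to u),
\]
and $D(v \to u) > 0$ since $v \gtrdot u$. The left side is at most $\Prob(X_i = u) = 0$, so $\Prob(X_{i+1} = v) = 0$, as claimed.

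For \emph{sufficiency}, I would define $D$ on each cover $u \lessdot v$ with $u \in \Omega_i$ and $v \in \Omega_{i+1}$ by
\[
D(v \to u) := \frac{\Prob(X_i = u)\, U(u \to v)}{\Prob(X_{i+1} = v)}
\]
whenever $\Prob(X_{i+1} = v) > 0$. For the remaining $v$, I would assign $D(v \to \cdot)$ any strictly positive probability distribution on the (at most countable) set of elements covered by $v$; such a distribution always exists because $\Omega$ is countable. The formula is unambiguous since the level $i$ is already determined by $u$. The hypothesis of the theorem is exactly the contrapositive of the statement ``$\Prob(X_{i+1} = v) > 0$ implies $\Prob(X_i = u) > 0$ for every $u \lessdot v$'', which is precisely what is needed to make the Bayes quotient strictly positive on exactly the covering relations, as required of a down rule.

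The remaining checks are routine. The row sum $\sum_{u \lessdot v} D(v \to u)$ equals $1$ by the $U$-chain marginal identity $\Prob(X_{i+1} = v) = \sum_{u \lessdot v} \Prob(X_i = u)\, U(u \to v)$ when $\Prob(X_{i+1} = v) > 0$, and by construction otherwise. The $D$-chain identity $\Prob(X_i = u, X_{i+1} = v) = \Prob(X_{i+1} = v)\, D(v \to u)$ is a direct rearrangement of the defining formula when $\Prob(X_{i+1} = v) > 0$, and both sides vanish otherwise. The final clause of the theorem is immediate, since positivity makes the hypothesis vacuously true. The only conceptual worry—whether the Bayes formula could be inconsistent across different values of $i$—never materialises, because the endpoint $u$ already pins down $i = \rho(u)$.
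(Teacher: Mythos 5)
Your proof is correct and follows essentially the same route as the paper's: both construct $D$ by the Bayesian inversion formula $D(v\to u)=\Prob(X_i=u)U(u\to v)/\Prob(X_{i+1}=v)$ and observe that the stated condition is exactly what makes the quotient well defined and positive on precisely the covering relations. The paper packages the verification through Theorem~\ref{th:chainsandtoggling} and the reversibility of the two-level alternating chain, whereas you check the $D$-chain identity and the row sums directly, but the substance is the same.
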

\begin{proof}
By Theorem~\ref{th:chainsandtoggling}, $(X_i)$ is a $D$-chain
if and only if $X_i,X_{i+1},X_i,X_{i+1},\dotsc$ is a
\UD-chain for any $i\in I$. Since $X_i,X_{i+1},X_i,X_{i+1},\dotsc$
is a reversible Markov chain (see any text book on Markov chains, for
instance \cite{ross2014introduction}), it is a \UD-chain if and only
if $\Prob(X_i=u)U(u\rightarrow v)=\Prob(X_{i+1}=v)D(v\rightarrow u)$
for any $u,v\in\Omega$. A $D$ satisfying that equation
can be chosen by letting
\begin{equation}
\label{eq:downrule}
D(v\rightarrow u) = \frac{\Prob(X_i=u)U(u\rightarrow v)}{\Prob(X_{i+1}=v)}
\end{equation}
but this is possible only unless $\Prob(X_i=u)=0$ and $\Prob(X_{i+1}=v)>0$ for some $u\lessdot v$
or $\Prob(X_i=u)>0$ and $\Prob(X_{i+1}=v)=0$
for some $u\lessdot v$. The latter is impossible since
$\Prob(X_{i+1}=v)\ge\Prob(X_i=u)U(u\rightarrow v)$.
\end{proof}

\subsection{Processes on the $d$-dimensional nonnegative integer lattice}
\label{sec:lattice}
In this section we will demonstrate two applications of Theorem~\ref{th:existencedownrule} on the $d$-dimensional nonnegative
integer lattice $\NN^d$. We will construct down rules
compatible with given up rules. We shall also see examples
of processes both with and without a limiting object.

\subsubsection{The lattice $\NN^d$}
For a positive integer $d$, let $(\NN^d,\le)$ be the poset of nonnegative integer $d$-tuples 
$(x_1,\dotsc,x_d)$ ordered component-wise, i.e. for
$x=(x_1,\dotsc,x_d), y=(y_1\dotsc,y_d)\in \NN^d$,
we have $x\le y$ if $x_i \le y_i$ for all $i=1,\dotsc,d$.
For $x,y\in \NN^d$, we have
\begin{align*}
x \lor y &= (\max(x_1,y_1),\dotsc,\max(x_d,y_d)) \text{ and} \\
x \land y &= (\min(x_1,y_1),\dotsc,\min(x_d,y_d)),
\end{align*}
so $\NN^d$ is a lattice, which we will refer to as \emph{the $d$-dimensional nonnegative integer lattice}.

Obviously, $\NN^d$ is
graded with the rank function
\[
|x| =\sum_{j=1}^d x_j
\]
and for $n\ge 0$, the level set $\NN_n^d=\{ x\in\NN^d : |x|=n \}$ is the set of weak compositions of $n$.

\subsubsection{A limiting object on $\NN^d$}
As an analogy to the concepts of limit shapes for birth- and birth-and-death processes on Young diagrams we will here define a limiting object for processes on $\NN^d$.

If we divide the coordinates of a lattice point reached in an up process on $\NN^d$ by the number of steps taken in the process, the result is a point in the $(d-1)$-dimensional simplex
$\Delta^{d-1}=\{ (x_1,\dotsc,x_d)\in[0,1]^d \;|\; x_1+\dotsb+x_d=1 \}$.
Under this scaling we define a limiting object as follows.

\begin{definition}
	For an up process on $\NN^d$, let $X_n=(X^1_n,\dotsc,X^d_n)$ be the lattice point after $n$ steps. 
	A point $x\in\Delta^{d-1}$ is a limit point
	of the up process if for any $\eps>0$ we have
	\[
	\lim_{n\rightarrow\infty}\Prob\left( \left| \tfrac{1}{n}X_n-x \right| < \eps \right) = 1.
	\]
\end{definition}

\subsubsection{Notation}
For $x=(x_1,\dotsc,x_d)\in \NN^d$, let
\begin{align*}
x_{(i)} &= (x_1,\dotsc, x_i-1, \dotsc, x_d) \text{ and} \\
x^{(i)} &= (x_1,\dotsc, x_i+1, \dotsc, x_d)
\end{align*}
for $i=1,\dotsc,d$. An up process on $\NN^d$ starts at $(0,\dotsc,0)$. In each step, the up rule $U$ increases the rank of the current state by~1, i.e.\ increments exactly one component. Thus, $U$ is determined by the probabilities $U(x\rightarrow x^{(i)})$ for $i=1,\dotsc,d$.

For $n\in\NN$, let $p_n\colon\NN^d_n \to [0,1]$ be the probability function on $\NN^d_n$ induced by the up rule $U$.

\subsubsection{An up process on $\NN^d$ with a limiting object}
\label{sec:ex:lattice1}
Consider the up rule $U$ on $\NN^d$ governed by the constant transition probabilities $U(x\rightarrow x^{(i)})=\nu_i$. We may use Theorem~\ref{th:existencedownrule} to conclude that there is a down rule $D$ compatible with $U$. Let us construct it!

First of all, the probability to reach $x=(x_1,\dotsc,x_d)$ (after $n=|x|$ steps in the up process induced by $U$) is clearly
\[
p_{n}(x_1,\dotsc,x_d) = \frac{n!}{x_1!\dotsb x_d!}{\nu_1}^{x_1}\dotsb {\nu_d}^{x_d}.
\]
As a consequence,
\[
p_{n-1}(x_{(i)}) = \frac{(n-1)!}{x_1!\dotsb (x_i-1)! \dotsb x_d!}{\nu_1}^{x_1}\dotsb {\nu_i}^{x_i-1} \dotsb {\nu_d}^{x_d}.
\]
We use \eqref{eq:downrule} to compute the transition probability
$D(x\rightarrow x_{(i)})$ from $x\in \NN^d_n$ to $x_{(i)}\in \NN^d_{n-1}$:
\[
D(x\rightarrow x_{(i)})
= \frac{p_{n-1}(x_{(i)}) U(x_{(i)}\rightarrow x)}{p_n(x)}
= \frac{p_{n-1}(x_{(i)}) \nu_i}{p_n(x)}
= \frac{x_i}{n} = \frac{x_i}{|x|}.
\]
Here we see that an up rule employing the degree of freedom parameters $\nu_1,\dotsc,\nu_d$ has a compatible down rule with no such degree of freedom present. As we saw in Corollary~\ref{cor:iff}, this is also the case with the up rule in the process $\textsc{row}(\mu)$ having a compatible down rule \textsc{derow}, void of the degree of freedom parameter $\mu$. 

\begin{proposition}
The up process on $\NN^d$ using the up rule $U(x\to x^{(i)})=\nu_i$ has the limit point $(\nu_1,\dotsc,\nu_d)$.
\end{proposition}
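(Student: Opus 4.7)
The plan is to recognize that under the constant up rule $U(x \to x^{(i)}) = \nu_i$, the up process $(X_n)$ is nothing other than a sum of i.i.d.\ multinomial increments: at each step we independently pick coordinate $i$ with probability $\nu_i$ and add $1$ to it. Writing $X_n = \sum_{k=1}^n Z_k$ where $Z_k \in \{e_1,\dotsc,e_d\}$ are i.i.d.\ with $\Prob(Z_k = e_i) = \nu_i$, we have $\mathbb{E}[Z_k] = (\nu_1,\dotsc,\nu_d)$ coordinatewise. The result should then follow immediately from the (weak) law of large numbers.

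Concretely, I would proceed as follows. First, verify that the Markov property together with the constant transition probabilities implies that the coordinatewise increments $Z_k = X_k - X_{k-1}$ are i.i.d.\ with the stated distribution. Second, note that the $i$th coordinate $X_n^i$ is then Binomial$(n,\nu_i)$, so in particular $\mathbb{E}[X_n^i/n] = \nu_i$ and $\mathrm{Var}(X_n^i/n) = \nu_i(1-\nu_i)/n$. Third, apply Chebyshev's inequality: for any $\eps > 0$,
\[
\Prob\!\left(\left|\tfrac{1}{n}X_n^i - \nu_i\right| \ge \tfrac{\eps}{\sqrt{d}}\right) \le \frac{d\,\nu_i(1-\nu_i)}{n\,\eps^2} \longrightarrow 0.
\]
Finally, use a union bound over $i = 1,\dotsc,d$, together with the inequality $|y| \le \sqrt{d}\max_i |y_i|$ for $y \in \RR^d$, to conclude that $\Prob(|\tfrac{1}{n}X_n - (\nu_1,\dotsc,\nu_d)| < \eps) \to 1$. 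This is precisely the condition in the definition of a limit point, so $(\nu_1,\dotsc,\nu_d) \in \Delta^{d-1}$ (which it is, since the $\nu_i$'s sum to $1$) is the limit point.

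There is essentially no obstacle here: the main ``content'' is merely the observation that a constant up rule makes the process a random walk with i.i.d.\ multinomial steps, after which a one-line application of the law of large numbers suffices. I would be careful only to state explicitly which norm on $\RR^d$ is used in the definition of a limit point, so that the union bound produces exactly the right estimate; any of the standard equivalent norms on $\RR^d$ works. Note that the compatible down rule $D(x \to x_{(i)}) = x_i/|x|$ constructed earlier plays no role in this proposition --- it is purely a statement about the up process --- though it is reassuring that the limit point is independent of the parameters present only on the up side, mirroring the parameter-free structure of $D$.
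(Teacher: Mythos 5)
Your proof is correct and follows essentially the same route as the paper's, which likewise observes that each coordinate $X_n^i$ is $\mathrm{Bin}(n,\nu_i)$. In fact your version is more complete: the paper stops at $E(X_n^i)/n=\nu_i$ and simply asserts the conclusion, whereas you supply the Chebyshev and union-bound step actually needed to verify the convergence in probability demanded by the definition of a limit point.
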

\begin{proof}
For this process we have $\Prob(X^i_n=k) = \binom{n}{k}{\nu}^k(1-\nu)^{n-k}$ for $i=1,\dotsc,d$, i.e.\ $X^i_n \sim \text{Bin}(n,\nu_i)$, which means $E(X^i_n)/n=\nu_i$.
Thus, the limit point is the $d$-tuple $(\nu_1,\dotsc,\nu_d)$.
\end{proof}

\subsubsection{An up process on $\NN^d$ without a limiting object}
\label{sec:ex:lattice2}
We will now consider an up process on $\NN^d$ starting at $(0,\dotsc,0)$ induced by an up rule that depends on the current level.

\begin{proposition}
\label{prop:ex2}
The process on $\NN^d$ induced by the up rule $U$ governed by the transition probabilities
\begin{equation}
\label{eq:ex2}
U(x\rightarrow x^{(i)})=\frac{x_i+1}{x_1+\dotsc+x_d+d}=\frac{x_i+1}{|x|+d}
\end{equation}
has a uniform distribution on $\NN^d_n$, for all $n\ge 0$.
\end{proposition}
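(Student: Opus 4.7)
The plan is to prove this by induction on $n$, exploiting that $|\NN^d_n|=\binom{n+d-1}{d-1}$ (the number of weak compositions of $n$ into $d$ parts), so ``uniform on $\NN^d_n$'' means $p_n(x)=1/\binom{n+d-1}{d-1}$ for every $x\in\NN^d_n$. The base case $n=0$ is immediate because $\NN^d_0=\{(0,\dots,0)\}$.

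For the inductive step, fix $y\in\NN^d_{n+1}$. The possible predecessors of $y$ in an up process are the points $y_{(i)}$ for those $i$ with $y_i\ge 1$, and by the recursion $p_{n+1}=Up_n$ we have
\[
p_{n+1}(y)=\sum_{i\,:\,y_i\ge 1} p_n(y_{(i)})\,U(y_{(i)}\rightarrow y).
\]
Using the induction hypothesis $p_n(y_{(i)})=1/\binom{n+d-1}{d-1}$ and the formula \eqref{eq:ex2}, one computes $U(y_{(i)}\rightarrow y)=y_i/(n+d)$, since $|y_{(i)}|=n$ and the $i$th coordinate of $y_{(i)}$ is $y_i-1$. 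Therefore
\[
p_{n+1}(y)=\frac{1}{\binom{n+d-1}{d-1}(n+d)}\sum_{i=1}^d y_i=\frac{n+1}{\binom{n+d-1}{d-1}(n+d)}.
\]
A short manipulation of factorials gives $\binom{n+d-1}{d-1}(n+d)/(n+1)=\binom{n+d}{d-1}$, so $p_{n+1}(y)=1/\binom{n+d}{d-1}$, independent of $y$. This closes the induction.

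The argument has essentially no obstacle; the only thing to verify carefully is the expression $U(y_{(i)}\rightarrow y)=y_i/(n+d)$, where one must remember that the $i$th coordinate of the predecessor $y_{(i)}$ is $y_i-1$ so the numerator $(y_i-1)+1$ becomes $y_i$. As a sanity check, one can afterwards feed the uniform distribution into \eqref{eq:downrule} of Theorem~\ref{th:existencedownrule} and observe that the induced compatible down rule is the ``uniform-coordinate'' rule $D(x\rightarrow x_{(i)})=x_i/|x|$ already derived in Section~\ref{sec:ex:lattice1}; this is a pleasing structural parallel but is not needed for the proof.
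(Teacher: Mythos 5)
Your proof is correct and follows exactly the same route as the paper: induction on $n$, computing $p_{n+1}(y)=\sum_i p_n(y_{(i)})U(y_{(i)}\rightarrow y)$ with $U(y_{(i)}\rightarrow y)=y_i/(n+d)$ and summing to get $(n+1)/\bigl(\binom{n+d-1}{d-1}(n+d)\bigr)=1/\binom{n+d}{d-1}$. The remark connecting the result to the down rule $D(x\rightarrow x_{(i)})=x_i/|x|$ also matches the computation the paper carries out immediately after the proposition.
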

\begin{proof}
We prove this by induction over the number of steps $n$ in the process. 
First of all we observe that $|\NN^d_n|=\binom{d+n-1}{d-1}$.

For $n=0$, we have a trivial uniform distribution.

For $n\ge 0$, assume that the distribution on $\NN^d_n$ under this process is uniform, i.e., $p_n(x)=|\NN^d_n|^{-1}={\binom{d+n-1}{d-1}}^{-1}$ for all $x\in \NN^d_n$. We want to prove that the distribution over $\NN^d_{n+1}$ is also uniform, i.e.,
$p_{n+1}(x)=|\NN^d_{n+1}|^{-1}={\binom{d+n}{d-1}}^{-1}$.

Let $x=(x_1,\dotsc,x_d)\in \NN^d_{n+1}$ be a fixed element. Of course $|x|=n+1$. Now, $x$ can be reached from any of the elements $x_{(1)},\dotsc,x_{(d)}\in \NN^d_n$. By \eqref{eq:ex2},
\[
U(x_{(j)} \rightarrow x)=\frac{(x_j-1)+1}{|x_{(j)}|+d}=\frac{x_j}{n+d},
\]
and by the induction hypothesis, $p_n(x_{(j)})={\binom{d+n-1}{d-1}}^{-1}$,
so the probability $p_{n+1}(x)$ for $x\in \NN^d_{n+1}$ is
\begin{eqnarray*}
p_{n+1}(x) & = & \sum_{j=1}^d p_n(x_{(j)}) P(x_{(j)}\rightarrow x) \\
& = & {\binom{d+n-1}{d-1}}^{-1}\sum_{j=1}^d \frac{x_j}{n+d} \\
& = & \frac{(d-1)!n!(n+1)}{(d+n-1)!(n+d)} = \frac{(d-1)!(n+1)!}{(n+d)!} 
 =  {\binom{d+n}{d-1}}^{-1},
\end{eqnarray*}
so the distribution is uniform also on $\NN^d_{n+1}$, and the result follows by induction.
\end{proof}
Since the distribution is uniform, this process cannot have a limiting object.

As in Section \ref{sec:ex:lattice1}, let us use \eqref{eq:downrule} to construct the down rule compatible with the up rule in Theorem~\ref{prop:ex2}. We get the probability $D(x\to x_{(i)})$ for moving from $x\in \NN^d_n$ to $x_{(i)}\in \NN^d_{n-1}$ by
\begin{eqnarray*}
D(x\rightarrow x_{(i)}) & = & \frac{p_{n-1}(x_{(i)}) P(x_{(i)}\rightarrow x)}{p_n(x)} \\
& = &
\frac{{\binom{d+n-2}{d-1}}^{-1} \frac{x_i}{n-1+d}}
{{\binom{d+n-1}{d-1}}^{-1}} 
 =  \frac{\binom{d+n-1}{d-1}}{\binom{d+n-2}{d-1}} \frac{x_i}{n-1+d}
=\frac{x_i}{n}
=\frac{x_i}{|x|}.
\end{eqnarray*}

\section{Given an up or down rule $T$, is there a maximal $T$-chain?}
\label{sec:result3}
Let $T$ be an up or down rule on an $I$-graded poset $\Omega$.
Recall that there is a one-to-one correspondence between $T$-chains
and $T$-sequences, so questions about the existence of maximal
$T$-chains are
equivalent to questions about the existence of maximal $T$-sequences.

If $I$ has a minimal element $m$ there obviously exists a maximal
$U$-sequence for a given up rule $U$:
Just choose any probability distribution
$\pi_m\in M(\Omega_m)$ on level $m$,
and the up rule $U$ will induce a $U$-sequence
$(U^{i-m}\pi_m)_{i\in I}$. However,
if $I$ has no lower bound it is not obvious whether there exists a
maximal $U$-sequence, and, by symmetry, if $I$ has no upper bound
it is not obvious whether there exists a maximal $D$-sequence for a given
down rule $D$. Here is an example where no such $D$-sequence exists.

\begin{example}
Let $\Omega$ be the two-dimensional integer lattice $\ZZ^2$
with the partial order
$(x,y)\le (x',y')$ if $x\le x'$ and $y\le y'$. It is $\ZZ$-graded
by $\rho(x,y)=x+y$. Let $D$ be the down rule with probability $1/2$
for each edge in the Hasse diagram.
If we start at any element of high rank $n$
and follow down edges randomly according to the down rule, the chance of
hitting any particular element of rank zero is very small;
it tends to zero as $n$ grows. Thus, in a maximal $D$-sequence
any rank-zero element must be given the probability zero, which is
impossible.
\end{example}

The phenomenon in the above example cannot happen
if the level sets $\Omega_i$ are finite, and,
as we will see in Theorem~\ref{th:finiteexistence}, in this case
there is always a maximal $T$-sequence.
As the following example
reveals, however, the existence of a \emph{positive}
maximal $T$-sequence is not guaranteed.

\begin{example}
\label{ex:n2}
Look at the two-dimensional
nonnegative integer lattice $\NN^2$ with the down rule given by the probability
$1/2$ at every edge in the Hasse diagram,
except for the leftmost and rightmost edges which
must have probability one; see Figure~\ref{fig:n2}.
If we start at any element of high rank $n$
and follow down edges randomly according to the down rule, with very high
probability we will walk into the left or right border before we reach
level 2. Thus, the probability of reaching the middle element $(1,1)$ at
level 2 tends to zero as $n$ grows. This means that every $D$-chain
$(X_i)$ must have $\Prob(X_2=(1,1))=0$.
\end{example}

\begin{figure}[h]
	%\centering
	\begin{tikzpicture} % [>=latex]
	\draw (0,0) -- (-3,3);
	\draw (0,0) -- (3,3);
	\draw (-1,1) -- (0,2);
	\draw (1,1) -- (0,2);
	
	\draw (-2,2) -- (-1,3);
	\draw (-1,3) -- (0,2);
	\draw (0,2) -- (1,3);
	\draw (1,3) -- (2,2);

	\draw[->] (-1,1) -- (-0.5,0.5);
	\draw[->] (1,1) -- (0.5,0.5);
	\draw[->] (-2,2) -- (-1.5,1.5);
	\draw[->] (0,2) -- (-0.5,1.5);
	\draw[->] (0,2) -- (0.5,1.5);
	\draw[->] (2,2) -- (1.5,1.5);

	\draw[->] (-3,3) -- (-2.5,2.5);
	\draw[->] (-1,3) -- (-1.5,2.5);
	\draw[->] (-1,3) -- (-0.5,2.5);	
	\draw[->] (1,3) -- (0.5,2.5);
	\draw[->] (1,3) -- (1.5,2.5);
	\draw[->] (3,3) -- (2.5,2.5);
	
	\draw[fill] (0,0) circle (.06);
	\draw[fill] (-1,1) circle (.06);
	\draw[fill] (-2,2) circle (.06);
	\draw[fill] (0,2) circle (.06);
	\draw[fill] (1,1) circle (.06);
	\draw[fill] (2,2) circle (.06);
	
	\draw[fill] (-3,3) circle (.06);
	\draw[fill] (-1,3) circle (.06);
	\draw[fill] (1,3) circle (.06);
	\draw[fill] (3,3) circle (.06);
	
	%\node at (0,-0.2) {$\hat{0}$};
	\node[left] at (-0.5,0.5) {$1$};
	\node[right] at (0.5,0.5) {$1$};
	\node[left] at (-1.5,1.45) {$1$};
	\node[left] at (-0.5,1.55) {$\frac{1}{2}$};
	\node[right] at (0.5,1.55) {$\frac{1}{2}$};
	\node[right] at (1.5,1.45) {$1$};

	\node[left] at (-2.5,2.5) {$1$};
	\node[left] at (-1.5,2.5) {$\frac{1}{2}$};
	\node[right] at (-0.5,2.5) {$\frac{1}{2}$};
	\node[left] at (0.5,2.5) {$\frac{1}{2}$};
	\node[right] at (1.5,2.5) {$\frac{1}{2}$};
	\node[right] at (2.5,2.5) {$1$};
	
	\draw[dotted] (-3,3) -- (-3.3,3.3);
	\draw[dotted] (-3,3) -- (-2.7,3.3);
	
	\draw[dotted] (-1,3) -- (-1.3,3.3);
	\draw[dotted] (-1,3) -- (-0.7,3.3);
	
	\draw[dotted] (1,3) -- (1.3,3.3);
	\draw[dotted] (1,3) -- (0.7,3.3);
	
	\draw[dotted] (3,3) -- (3.3,3.3);
	\draw[dotted] (3,3) -- (2.7,3.3);	
	
	\end{tikzpicture}
	\caption{The down rule on $\NN^2$ in example~\ref{ex:n2}.}
	\label{fig:n2}
\end{figure}
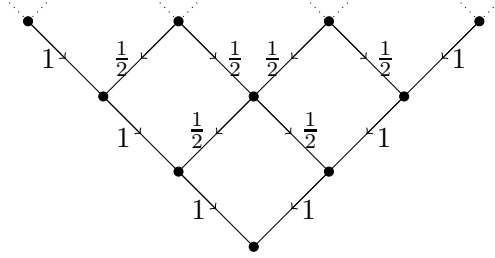

We will present three theorems about the existence of $T$-sequences
and positive $T$-sequences. Their proofs all depend on the following
lemma.

\begin{lemma}\label{lm:compact}
Let $T$ be an up or down rule on an $I$-graded poset $\Omega$.
Let $(C_i\subseteq M(\Omega_i))_{i\in I}$ be a sequence of compact
sets and suppose for any $m\le n$
in $I$ there exists a $T$-sequence $(\pi'_i\in C_i)_{i\in[m,n]}$.
Then there exists a maximal $T$-sequence $(\pi_i\in C_i)_{i\in I}$.
\end{lemma}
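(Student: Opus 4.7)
The plan is a compactness/finite-intersection argument based on Tychonoff's theorem. Endow each $C_i$ with the subspace topology from the $\ell_1$-norm on $\ell_1(\Omega_i)$ (which is Hausdorff), and form the product space $X=\prod_{i\in I} C_i$ with the product topology. By Tychonoff's theorem, $X$ is compact. For each pair $m\le n$ in $I$, let $A_{m,n}\subseteq X$ consist of those sequences $(\pi_i)_{i\in I}$ whose restriction to $[m,n]$ is a $T$-sequence, that is, satisfies $T\pi_j=\pi_{j+1}$ (if $T$ is an up rule) or $T\pi_j=\pi_{j-1}$ (if $T$ is a down rule) for every relevant $j\in[m,n]$. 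The goal is to show $\bigcap_{m\le n} A_{m,n}\ne\emptyset$; any element of this intersection is a maximal $T$-sequence in $(C_i)_{i\in I}$.

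Three things must be checked. First, each $A_{m,n}$ is closed: the operator $T$ is a contraction on $\ell_1$ (since $\lVert T\pi\rVert_1 \le \sum_u \lvert\pi(u)\rvert \sum_v T(u\to v)\le \lVert\pi\rVert_1$), hence norm-continuous, so each equation $T\pi_j=\pi_{j\pm 1}$ defines a closed subset of $X$ (composing continuous coordinate projections with continuous $T$), and $A_{m,n}$ is a finite intersection of such. Second, each $A_{m,n}$ is nonempty: the hypothesis furnishes a $T$-sequence $(\pi'_i\in C_i)_{i\in[m,n]}$, and we may extend it to all of $I$ by picking any element of $C_j$ for $j\notin[m,n]$ --- each $C_j$ being nonempty by applying the hypothesis with $m=n=j$. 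Third, the family $\{A_{m,n}\}$ has the finite intersection property: given finitely many pairs $(m_k,n_k)$, set $M=\min_k m_k$ and $N=\max_k n_k$; then $A_{M,N}\subseteq\bigcap_k A_{m_k,n_k}$, since the restriction of any $T$-sequence on $[M,N]$ to each $[m_k,n_k]$ remains a $T$-sequence.

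By compactness of $X$, the total intersection $\bigcap_{m\le n\in I} A_{m,n}$ is then nonempty, producing the desired maximal $T$-sequence. The main technical issue to be careful about is merely selecting the right topology so that Tychonoff applies and $T$ is continuous on the $C_i$; conceptually the whole argument is just the familiar principle that ``finitely satisfiable implies satisfiable'' in a compact setting. The use of Tychonoff invokes the axiom of choice, consistent with the remark in the introduction that the results in this section rely on it.
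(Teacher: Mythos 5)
Your proof is correct and follows essentially the same route as the paper: both arguments form the compact product $\prod_{i\in I}C_i$ via Tychonoff, observe that $T$ has operator norm at most $1$ and hence that each constraint $T\pi_j=\pi_{j\pm1}$ cuts out a closed set, and then use finite satisfiability of the constraints. The only (immaterial) difference is that you phrase the final step via the finite intersection property while the paper takes the dual formulation, deriving a contradiction from a finite subcover of the complements.
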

\begin{proof}
Without loss of generality, we may assume that $T=U$ is an up rule.

Let $\Pi$ be the product space $\Pi=\prod_{i\in I}C_i$.
For any non-maximal $k\in I$, let
\[
\Pi_k:=\{(\pi_i)\in
\Pi\ \vert\ U\pi_k=\pi_{k+1}\}.
\]
Let us first show that the set $\Pi_k$ is closed.

Clearly, $\Pi_k$ is homeomorphic to
the product of $\prod_{i\in I\setminus\{k,k+1\}}C_i$ and
the graph $G\subseteq C_k\times C_{k+1}$ of the restriction of $U$ to
$C_k\cap U^{-1}C_{k+1}$. The map $U$ has operator norm 1,
so it is continuous.
Hence, the preimage $U^{-1}C_{k+1}$ is closed and so is
$C_k\cap U^{-1}C_{k+1}$. It follows that the graph $G$ is closed
and hence $\Pi_k$ is closed, being homeomorphic to a product of
closed sets.

By Tychonoff's theorem $\Pi$ is compact,
and therefore the closed subsets $\Pi_k$ are compact, and
so is the intersection $\Lambda$ of all $\Pi_k$.
Clearly, $\Lambda$ is precisely the set of maximal $U$-sequences,
so our task is to show that
$\Lambda$ is nonempty. To that end, suppose it is
empty and define $U_k=\Pi\setminus \Pi_k$.
Then $\bigcup U_k=\Pi\setminus\bigcap \Pi_k=\Pi$
so the family of all sets
$U_k$ is an open cover of $\Pi$. Since $\Pi$ is compact there
is an open
subcover $\{U_k\}_{k\in F}$, where $F$ is a finite set of
non-maximal elements in $I$.
Choose $m$ and $n$ in $I$ so that $m\le k<n$ for any $k\in F$.
Now, by the assumption in the
theorem there is a
$U$-sequence $(\pi'_i\in C_i)_{i\in[m,n]}$
For $i\in I\setminus[m,n]$, let $\pi'_i$ be an arbitrary element
in $C_i$.
The sequence $(\pi'_i)_{i\in I}$ so obtained is a point outside the union
$\bigcup_{k\in F}U_k$, which
contradicts the fact that
$\{U_k\}_{k\in F}$ covers $\Pi$. Hence,
our supposition that $\Lambda$ is empty is false.
\end{proof}

Our first existence theorem states that if there
exists a $T$-sequence for any finite subinterval of $I$, and these
sequences are uniformly bounded in a certain sense, then there
exists a maximal $T$-sequence.

For $f,g\in\ell_1(\Omega)$ we will write $f\le g$ if
$f(u)\le g(u)$ for all $u\in \Omega$.
\begin{theorem}\label{th:generalexistence}
Let $T$ be an up or down rule on an $I$-graded poset $\Omega$.

Suppose there is a sequence $(\hat{b}_i\in\ell_1(\Omega_i))_{i\in I}$
such that for any $m\le n$
there exists a $T$-sequence $(\pi_i)_{i\in [m,n]}$
with $\pi_i\le\hat{b}_i$ for any $i\in [m,n]$. Then there exists
a maximal $T$-sequence.
\end{theorem}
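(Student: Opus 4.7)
The plan is to apply Lemma~\ref{lm:compact} with the natural choice of compact sets
\[
C_i := \{\pi\in M(\Omega_i)\ :\ \pi\le\hat{b}_i\},\qquad i\in I.
\]
By hypothesis, for every $m\le n$ in $I$ there is a $T$-sequence $(\pi_i)_{i\in[m,n]}$ with $\pi_i\le\hat{b}_i$, hence $\pi_i\in C_i$ for each such $i$; this is exactly the finite-subinterval existence assumption demanded by Lemma~\ref{lm:compact}. What remains is to verify that each $C_i$ is compact in the $\ell_1$-norm topology on $\ell_1(\Omega_i)$, since that is the topology under which the lemma's proof invokes norm-continuity of $T$.

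Compactness of $C_i$ is the main step, and I would establish it by diagonal extraction combined with dominated convergence. Enumerate the countable set $\Omega_i$ as $\{u_1,u_2,\dotsc\}$ and let $(\pi^{(n)})_{n\ge 1}$ be any sequence in $C_i$. Each coordinate sequence $(\pi^{(n)}(u_k))_n$ lies in the compact real interval $[0,\hat{b}_i(u_k)]$, so a standard diagonal argument produces a subsequence (still denoted $\pi^{(n)}$) converging pointwise on $\Omega_i$ to some function $f$ with $0\le f\le\hat{b}_i$. Since $\hat{b}_i\in\ell_1(\Omega_i)$ is an integrable majorant for every $\pi^{(n)}$, the dominated convergence theorem in its $\ell_1$-sum form yields $\lVert\pi^{(n)}-f\rVert\to 0$. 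Norm convergence preserves total mass, so passing to the limit in $\sum_{u}\pi^{(n)}(u)=1$ gives $\sum_u f(u)=1$, and hence $f\in C_i$. Thus $C_i$ is sequentially compact, and since $\ell_1(\Omega_i)$ is metrizable, compact.

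With both hypotheses of Lemma~\ref{lm:compact} verified, the lemma produces a maximal $T$-sequence $(\pi_i\in C_i)_{i\in I}$, which in particular is a maximal $T$-sequence on $\Omega$; indeed it even comes with the additional property $\pi_i\le\hat{b}_i$ for every $i$. The main obstacle I foresee is the compactness argument itself: the pointwise bound on each $\pi^{(n)}$ and the $\ell_1$-norm topology have to cooperate, and the key insight is that an integrable pointwise majorant in $\ell_1(\Omega_i)$ is precisely what lets one upgrade pointwise limits to norm limits. Once that cooperation is set up, the theorem reduces to a direct invocation of Lemma~\ref{lm:compact}.
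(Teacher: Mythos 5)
Your proof is correct and follows essentially the same route as the paper: the same sets $C_i=\{\pi\in M(\Omega_i):\pi\le\hat{b}_i\}$ are fed into Lemma~\ref{lm:compact}, and compactness is extracted from the coordinatewise bounds $[0,\hat{b}_i(u)]$ together with the dominated convergence theorem. The only cosmetic difference is that you establish sequential compactness by diagonal extraction and then use metrizability of $\ell_1(\Omega_i)$, whereas the paper notes that $L_i=\{\pi:0\le\pi\le\hat{b}_i\}$ is homeomorphic to $\prod_{u\in\Omega_i}[0,\hat{b}_i(u)]$ and invokes Tychonoff's theorem.
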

\begin{proof}
For any $i\in I$, let
$C_i=\{\pi\in M(\Omega_i)\ :\ \pi\le\hat{b}_i\}$.
If we can show that $C_i$ is compact, the theorem will follow
from Lemma~\ref{lm:compact}. We can write
$C_i=M(\Omega_i)\cap L_i$ where
\[
L_i=\{\pi\colon\Omega_i\to\RR\ | \;
0\le\pi\le\hat{b}_i\}\subset\ell_1(\Omega_i).
\]
Since $M(\Omega_i)$ is closed it suffices to show that
$L_i$ is compact. By the dominated convergence theorem
$L_i$ is homeomorphic to the product space
$\prod_{u\in\Omega_i}[0,\hat{b}_i(u)]$ which is compact by
Tychonoff's theorem.
\end{proof}

For the existence of a \emph{positive} maximal
$T$-sequence, it is not enough to assume the existence of finite $T$-sequences that are uniformly bounded
from above; they must be uniformly bounded from above and from below
simultaneously!

Let $\ell_1^+(\Omega_i)=\{f\in\ell_1(\Omega_i)\,:\,f(u)>0\
\text{for any}\ u\in\Omega_i\}$ denote the set of strictly positive
$\ell_1$-functions on $\Omega_i$.
\begin{theorem}\label{th:generalpositiveexistence}
Let $T$ be an up or down rule on an $I$-graded poset $\Omega$.

Suppose there are sequences
%$\check{b}\in\prod_{i\in I}\ell_1^+(\Omega_i)$
$(\check{b}_i\in\ell_1^+(\Omega_i))_{i\in I}$
and
%$\hat{b}\in\prod_{i\in I}\ell_1(\Omega_i)$
$(\hat{b}_i\in\ell_1^+(\Omega_i))_{i\in I}$
such that for any $m\le n$ in $I$
there exists a $T$-sequence $(\pi_i)_{i\in[m,n]}$ with
$\check{b}_i\le\pi_i\le\hat{b}_i$ for any $i\in[m,n]$.
Then there exists a positive maximal $T$-sequence.
\end{theorem}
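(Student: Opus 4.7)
The plan is to imitate the proof of Theorem~\ref{th:generalexistence} essentially verbatim, but to replace the compact sets $C_i \subseteq M(\Omega_i)$ used there by the smaller family
\[
C_i = \{\pi \in M(\Omega_i) \,:\, \check{b}_i \le \pi \le \hat{b}_i\}.
\]
The point is that any maximal $T$-sequence $(\pi_i)_{i\in I}$ with $\pi_i \in C_i$ automatically satisfies $\pi_i(u) \ge \check{b}_i(u) > 0$ for every $u \in \Omega_i$; the strict positivity demanded in the conclusion is smuggled into the definition of the compact sets that are fed to Lemma~\ref{lm:compact}.

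First I would verify that each $C_i$ is compact. Writing $C_i = M(\Omega_i) \cap L_i$ where
\[
L_i = \{\pi\colon \Omega_i \to \RR \,\mid\, \check{b}_i \le \pi \le \hat{b}_i\} \subseteq \ell_1(\Omega_i),
\]
and using that $M(\Omega_i)$ is closed, it suffices to show that $L_i$ is compact. This is the same dominated-convergence step as in the proof of Theorem~\ref{th:generalexistence}: the evaluation map $\pi \mapsto (\pi(u))_{u \in \Omega_i}$ is a homeomorphism between $L_i$ (in the $\ell_1$ norm topology) and the product space $\prod_{u \in \Omega_i}[\check{b}_i(u), \hat{b}_i(u)]$ (in the product topology), with $\hat{b}_i \in \ell_1(\Omega_i)$ serving as the integrable dominating function; Tychonoff's theorem then gives compactness of the product.

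Next I would apply Lemma~\ref{lm:compact} directly with these $C_i$. The hypothesis of the theorem is exactly the input the lemma requires: for any $m \le n$ in $I$ there is a $T$-sequence $(\pi_i)_{i\in[m,n]}$ with $\pi_i \in C_i$. The lemma then produces a maximal $T$-sequence $(\pi_i\in C_i)_{i\in I}$, and positivity follows from $\pi_i \ge \check{b}_i$ pointwise.

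There is essentially no obstacle beyond checking that the dominated-convergence homeomorphism still works once a strictly positive lower bound is imposed, but the upper bound $\hat{b}_i \in \ell_1(\Omega_i)$ alone dominates any pointwise convergent sequence in $L_i$, so nothing in that part of the argument changes. The conceptual content is therefore that requiring a uniform $\ell_1$-lower bound $\check{b}_i$ — and not merely pointwise positivity — on the finite $T$-sequences is what upgrades the conclusion of Theorem~\ref{th:generalexistence} from mere existence of a maximal $T$-sequence to existence of a positive one.
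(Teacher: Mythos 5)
Your proposal is correct and coincides with the paper's proof, which simply says the argument is completely analogous to that of Theorem~\ref{th:generalexistence} with $L_i=\{\pi\colon\Omega_i\to\RR\ :\ \check{b}_i\le\pi\le\hat{b}_i\}$. You have just spelled out the same steps (compactness of the box via the dominated-convergence homeomorphism onto $\prod_{u\in\Omega_i}[\check{b}_i(u),\hat{b}_i(u)]$ and Tychonoff, then Lemma~\ref{lm:compact}, with positivity forced by $\pi_i\ge\check{b}_i$) in more detail.
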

\begin{proof}
The proof is completely analogous to that of
Theorem~\ref{th:generalexistence}, but with
$L_i=\{\pi\colon\Omega_i\to\RR\ :\
\check{b}_i\le\pi\le\hat{b}_i\}\subset\ell_1(\Omega_i)$.
\end{proof}

In most combinatorial applications, the level sets $\Omega_i$ are
finite. In that case, the uniform upper bound $(\hat{b}_i)$
in the assumption in
Theorem~\ref{th:generalexistence} automatically exists. Furthermore, the
requirement of uniformicity of the lower bound $(\check{b}_i)$ in Theorem~\ref{th:generalpositiveexistence} can be relaxed.

The following theorem is stated for an up rule, but the dual statement
for a down rule is of course equivalent.

\begin{theorem}\label{th:finiteexistence}
Let $U$ be an up rule on an $I$-graded poset $\Omega$ with
finite level sets $\Omega_i$.
Then there exists a maximal $U$-sequence,
and there exists a positive
maximal $U$-sequence
if and only if there is a sequence
$(b_i\colon\Omega_i\to(0,\infty))_{i\in I}$
with the property that for any $m\le n$
in $I$ there is a distribution $\pi_{m,n}\in M(\Omega_m)$ such that
$U^{n-m}\pi_{m,n}\ge b_n$.
\end{theorem}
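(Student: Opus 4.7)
The first claim (existence of a maximal $U$-sequence) follows from Theorem~\ref{th:generalexistence} applied with the trivial upper bound $\hat{b}_i\equiv 1$; this lies in $\ell_1(\Omega_i)$ because $\Omega_i$ is finite, and every $U$-sequence $(U^{i-m}\pi_m)_{i\in[m,n]}$ (for arbitrary $\pi_m\in M(\Omega_m)$) trivially obeys $\pi_i\le\hat{b}_i$. The ``only if'' half of the positive claim is equally immediate: given a positive maximal $U$-sequence $(\pi_i)$, set $b_i:=\pi_i$ and $\pi_{m,n}:=\pi_m$, so that $U^{n-m}\pi_{m,n}=\pi_n=b_n$.

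The substantive direction is the ``if'' part. My plan is to produce, for each pair $(j,v)$ with $j\in I$ and $v\in\Omega_j$, a maximal $U$-sequence $P^{(j,v)}$ satisfying $P^{(j,v)}_j(v)\ge b_j(v)$, and then average these countably many sequences against strictly positive weights. Concretely, pick $w_{j,v}>0$ with $\sum_{(j,v)}w_{j,v}=1$ (possible since $\bigcup_i\Omega_i$ is countable) and set
\[
\pi^*_i:=\sum_{(j,v)}w_{j,v}P^{(j,v)}_i.
\]
Linearity of $U$ together with Tonelli's theorem shows that $U\pi^*_i=\pi^*_{i+1}$ and $\sum_u\pi^*_i(u)=1$, so $(\pi^*_i)$ is a maximal $U$-sequence; the pointwise bound $\pi^*_i(u)\ge w_{i,u}P^{(i,u)}_i(u)\ge w_{i,u}b_i(u)>0$ then makes it positive.

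Constructing each $P^{(j,v)}$ is where the hypothesis enters, and is carried out through Lemma~\ref{lm:compact}. I would take $C_i:=M(\Omega_i)$ for $i\ne j$ and $C_j:=\{\pi\in M(\Omega_j):\pi(v)\ge b_j(v)\}$; all the $C_i$ are compact subsets of the finite-dimensional simplices $M(\Omega_i)$. The hypothesis of the lemma requires, for every $m\le n$ in $I$, a $U$-sequence on $[m,n]$ taking values in $\prod_{i\in[m,n]}C_i$. If $j\notin[m,n]$ any $U$-sequence will do; if $j\in[m,n]$ I apply the standing hypothesis to the sub-interval $[m,j]$ to obtain $\tilde\pi\in M(\Omega_m)$ with $U^{j-m}\tilde\pi\ge b_j$, and then define $\pi'_i:=U^{i-m}\tilde\pi$ for all $i\in[m,n]$, so that $\pi'_j\ge b_j$ in particular lies in $C_j$. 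The main subtlety is to recognise that the standing hypothesis must be invoked at the intermediate level $j$ rather than at the top $n$ of the current interval; once that move is made, Lemma~\ref{lm:compact} yields $P^{(j,v)}$ as a maximal $U$-sequence in $\prod_iC_i$, automatically satisfying $P^{(j,v)}_j(v)\ge b_j(v)>0$, and the convex combination argument finishes the proof.
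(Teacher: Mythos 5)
Your proof is correct, but the ``if'' direction takes a genuinely different route from the paper's. The paper performs the convex-combination trick \emph{before} the compactness argument: it fixes positive weights $\gamma_i$ summing to one, sets $\check{b}_i=\gamma_i b_i$, and for each finite window $[m,n]$ forms the single distribution $\pi=\Gamma^{-1}\sum_{i\in[m,n]}\gamma_i\pi_{m,i}$ at the bottom level $m$; by linearity of $U$ this one finite $U$-sequence satisfies $U^{j-m}\pi\ge\check{b}_j$ for \emph{all} $j\in[m,n]$ simultaneously, so Theorem~\ref{th:generalpositiveexistence} (with the trivial upper bound $\hat{b}_i\equiv1$) applies in a single stroke. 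You instead defer the averaging until \emph{after} compactness: for each pair $(j,v)$ you run Lemma~\ref{lm:compact} with a one-point constraint $C_j=\{\pi:\pi(v)\ge b_j(v)\}$ — correctly noting that the hypothesis must be invoked on the subinterval $[m,j]$ rather than $[m,n]$ — to get a maximal sequence charging $v$, and then take a countable convex combination of these maximal sequences, justified by Tonelli and the continuity of $U$. Both arguments ultimately rest on the same compactness lemma and the same convexity-plus-linearity idea; the paper's version is more economical (one application of the lemma, and it delivers an explicit uniform lower bound $\gamma_i b_i$ on the resulting sequence), whereas yours requires countably many applications and an extra interchange of summations, but shows that positivity can be assembled pointwise from separately constructed maximal sequences. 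Your handling of the first claim and of the ``only if'' direction coincides with the paper's (the latter is left implicit there).
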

\begin{proof}
Define a sequence $(\hat{b}_i\in\ell_1(\Omega_i))_{i\in I}$
%$\hat{b}\in\prod_{i\in I}\ell_1(\Omega_i)$
by letting $\hat{b}_i(u)=1$ for any $u\in\Omega_i$. Now
Theorem~\ref{th:generalexistence} guarantees the existence of
a maximal $U$-sequence.

Next, suppose there is a sequence
$(b_i\colon\Omega_i\to(0,\infty))_{i\in I}$
and distributions $(\pi_{m,n}\in M(\Omega_m))_{m\le n}$
with the property given in the theorem.
Let $(\gamma_i)_{i\in I}$ be positive numbers adding to one,
and for each $i\in I$ put $\check{b}_i=\gamma_i b_i$.

Now, fix $m\le n$ in $I$.
Let $\Gamma=\sum_{i\in [m,n]}\gamma_i$
and define
\[
\pi=\frac1{\Gamma}\sum_{i\in[m,n]} \gamma_i\pi_{m,i}.
\]
Since $U$ is a linear operator, for any $j\in[m,n]$ we have
\[
U^{j-m}\pi=
\frac1{\Gamma}\sum_{i\in[m,n]} \gamma_i U^{j-m}\pi_{m,i}
\ge
\frac{\gamma_j}{\Gamma}U^{j-m}\pi_{m,j}
\ge
\frac{\gamma_j}{\Gamma}b_j
=\frac{\check{b}_j}{\Gamma}
\ge \check{b}_j
\]
and hence $(U^{j-m}\pi)_{j\in[m,n]}$ is a $T$-sequence such that
$U^{j-m}\pi\ge\check{b}_j$ for any $j\in[m,n]$.
Theorem~\ref{th:generalpositiveexistence} now yields the existence
of a positive maximal $U$-sequence.
\end{proof}

Theorem~\ref{th:finiteexistence} guarantees the existence of a
maximal $T$-sequence if all level sets are finite, but
in general it is not possible to extend
a $T$-sequence for a subinterval $J\subset I$ to a maximal $T$-sequence.
For example, consider the poset in
Figure~\ref{fig:notconstructive} with a defined down rule $D$.
The distributions $\pi_0\in M(\Omega_0)$ and $\pi_1\in M(\Omega_1)$ given by
$\pi_0(\hat{0})=1$, $\pi_1(s_1)=1/4$ and $\pi_2(s_2)=3/4$ constitute
a $D$-chain, but this $D$-chain can obviously not be extended to the
top level.
\begin{figure}[h]
\setlength{\unitlength}{0.06cm}
\begin{picture}(80,80)

\put(40,10){\circle*{3}}
\put(10,40){\circle*{3}}
\put(70,40){\circle*{3}}
\put(40,70){\circle*{3}}

%\thicklines
\put(10,40){\vector(1,-1){16}}
\put(70,40){\vector(-1,-1){16}}
\put(40,70){\vector(-1,-1){16}}
\put(40,70){\vector(1,-1){16}}

%\thinlines
\put(10,40){\line(1,-1){30}}
\put(70,40){\line(-1,-1){30}}
\put(40,70){\line(-1,-1){30}}
\put(40,70){\line(1,-1){30}}

\put(38.6,3.5){$\hat{0}$}
\put(20,20){$1$}
\put(57,20){$1$}
\put(18,55){$\frac{1}{2}$}
\put(59,55){$\frac{1}{2}$}
\put(38.6,73){$\hat{1}$}

\put(3,39){$s_1$}
\put(72,39){$s_2$}

\end{picture}
\caption{A graded poset with a given down rule $D$.}
\label{fig:notconstructive}
\end{figure}
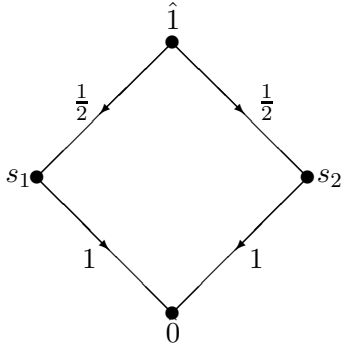

\section{General homogeneous Markov chains}
Finally, we shall use Theorem~\ref{th:generalexistence} to prove a more general result about the 
existence of Markov chains with time interval $\ZZ$. As far as we know, this has not been treated
before.

Let $S$ be a countable (or finite) state space and let $P$ be a transition
function on $S$, that is, an assignment of a probability $P(s\to s')$ to
each pair $(s,s')\in S\times S$ such that $\sum_{s'\in S}P(s\to s')=1$ for any $s\in S$. For any initial (random) state $X_0$, there is a unique Markov chain $X_0,X_1,\dotsc$ with transition function $P$, but is there a Markov chain 
$\dotsc,X_{-1},X_0,X_1,\dotsc$ with time interval $\ZZ$? The following ``homogeneous'' version of Theorem~\ref{th:generalexistence} answers that
question.

\begin{theorem}
Let $S$ be a state space and let $P$ be a transition function on $S$.
Suppose there exists a sequence of functions $(\hat{b}_i \in \ell_1(S))_{i\in\ZZ}$ such that for any
integers $m \le n$ there is a Markov chain $X_m,\dotsc,X_n$ on $S$ with transition function $P$ such
that $\Prob(X_i=s) \le \hat{b}_i(s)$
for any $i\in[m,n]$ and any $s\in S$.
Then there exists a Markov chain $\dotsc,X_{-1},X_0,X_1,\dotsc$ on $S$
with transition function $P$.
\end{theorem}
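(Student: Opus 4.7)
The plan is to reduce to Theorem~\ref{th:generalexistence} by repackaging the data $(S,P)$ as an up rule on a suitable $\ZZ$-graded poset whose $U$-chains coincide with the homogeneous Markov chains on $S$ governed by $P$. Set $\Omega = S \times \ZZ$ with rank function $\rho(s,i)=i$, and declare $(s,i) \lessdot (s',i+1)$ whenever $P(s \to s') > 0$; let the partial order on $\Omega$ be the reflexive transitive closure of these generating relations. Since rank strictly increases along every generating edge, the closure is automatically antisymmetric, $\Omega$ is a $\ZZ$-graded poset, and the listed relations are exactly its covers. Define $U\bigl((s,i) \to (s',i+1)\bigr) := P(s \to s')$; the identity $\sum_{s'} P(s \to s')=1$ makes $U$ a bona fide up rule on $\Omega$.

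Next I would verify the correspondence. Under the identification $X_i \leftrightarrow (Y_i,i)$, the defining equation of a $U$-chain, $\Prob(X_i=u,\,X_{i+1}=v)=\Prob(X_i=u)\,U(u \to v)$, collapses to $\Prob(Y_i=s,\,Y_{i+1}=s') = \Prob(Y_i=s)\,P(s \to s')$, which is exactly the homogeneous Markov property with transition function $P$. Via the one-to-one correspondence between $U$-chains and $U$-sequences noted in the paper, the existence of a $\ZZ$-indexed Markov chain on $S$ with transition function $P$ is therefore equivalent to the existence of a maximal $U$-sequence on $\Omega$.

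Finally I would transfer the hypothesis to the form required by Theorem~\ref{th:generalexistence}. Lift each $\hat{b}_i$ to $\hat{b}_i' \in \ell_1(\Omega_i)$ by setting $\hat{b}_i'(s,i) := \hat{b}_i(s)$; the $\ell_1$-condition is inherited. For any $m \le n$, the assumed Markov chain $X_m,\dotsc,X_n$ on $S$ corresponds to a $U$-sequence $(\pi_i)_{i\in[m,n]}$ on $\Omega$ with $\pi_i(s,i) = \Prob(X_i=s) \le \hat{b}_i(s) = \hat{b}_i'(s,i)$, so $\pi_i \le \hat{b}_i'$. Theorem~\ref{th:generalexistence} then delivers a maximal $U$-sequence on $\Omega$, which via the correspondence is the sought doubly infinite Markov chain on $S$.

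The only real bookkeeping is in the initial construction of $\Omega$ --- confirming that the transitive closure of the generating edges is antisymmetric and that the resulting covers agree with the generating edges --- but both facts follow at once from the strict monotonicity of $\rho$ along generating chains. Everything else is a transparent translation, and no new compactness or topological input beyond Theorem~\ref{th:generalexistence} is required.
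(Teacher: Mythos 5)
Your proposal is correct and is essentially identical to the paper's own proof: the paper likewise defines $\Omega=S\times\ZZ$ with $(s,i)\lessdot(s',i+1)$ when $P(s\to s')>0$, sets $U((s,i)\to(s',i+1))=P(s\to s')$, and invokes Theorem~\ref{th:generalexistence}. The extra verifications you supply (antisymmetry of the generated order, the cover relations, the chain/sequence correspondence, and the lift of $\hat{b}_i$) are exactly the routine details the paper leaves implicit.
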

\begin{proof}
Define a $\ZZ$-graded poset $\Omega=S\times\ZZ$ with $(s,i)\lessdot(s',i+1)$ if $P(s,s')>0$
and define an up rule $U$ on $\Omega$ by letting
\[
U( (s,i)\to(s',i+1) ) = P(s\to s')
\]
for any $i\in\ZZ$ and any $s,s'\in S$. Now the theorem follows from
Theorem~\ref{th:generalexistence}.
\end{proof}

\bibliographystyle{amsplain}
\bibliography{C:/Users/mjn15/Dropbox/C/Documents/LaTeX/Bib/limitshape}

\providecommand{\bysame}{\leavevmode\hbox to3em{\hrulefill}\thinspace}
\providecommand{\MR}{\relax\ifhmode\unskip\space\fi MR }
% \MRhref is called by the amsart/book/proc definition of \MR.
\providecommand{\MRhref}[2]{%
  \href{http://www.ams.org/mathscinet-getitem?mr=#1}{#2}
}
\providecommand{\href}[2]{#2}
\begin{thebibliography}{1}

\bibitem{andrews2004integer}
George~E Andrews and Kimmo Eriksson, \emph{Integer partitions}, Cambridge
  University Press, 2004.

\bibitem{Eriksson2012575}
K.~Eriksson and J.~Sj\"{o}strand, \emph{Limiting shapes of birth-and-death
  processes on {Y}oung diagrams}, Adv. in Appl. Math. \textbf{48} (2012),
  no.~4, 575 -- 602.

\bibitem{hoppen2013limits}
Carlos Hoppen, Yoshiharu Kohayakawa, Carlos~Gustavo Moreira, Bal{\'a}zs
  R{\'a}th, and Rudini~Menezes Sampaio, \emph{Limits of permutation sequences},
  Journal of Combinatorial Theory, Series B \textbf{103} (2013), no.~1,
  93--113.

\bibitem{ross2014introduction}
Sheldon~M Ross, \emph{Introduction to probability models}, Academic press,
  2014.

\bibitem{stanley1988differential}
Richard~P Stanley, \emph{Differential posets}, J. Amer. Math. Soc. \textbf{1}
  (1988), no.~4, 919--961.

\bibitem{stanley1996enumerative1}
\bysame, \emph{Enumerative combinatorics, vol. 1, {W}adsworth and
  {B}rooks/{C}ole, {P}acific {G}rove, {CA}, 1986; second printing}, 1996.

\bibitem{VershikStatMech}
A.~Vershik, \emph{Statistical mechanics of combinatorial partitions, and their
  limit shapes}, Funct. Anal. Appl. \textbf{30} (1996), no.~2, 90--105.

\end{thebibliography}
\end{document}